\documentclass[11pt]{article}

\RequirePackage{amsthm,amsmath,amsfonts,amssymb}
\RequirePackage[authoryear]{natbib}
\RequirePackage[colorlinks,citecolor=blue,urlcolor=blue]{hyperref}
\RequirePackage{graphicx}

\usepackage{amscd, paralist}
\usepackage{enumerate}
\usepackage{enumitem}
\usepackage[export]{adjustbox}
\usepackage[dvipsnames]{xcolor}
\usepackage{bm}
\usepackage[title]{appendix}
\usepackage{tikz}
\usepackage{tikz-cd}
\usepackage{wrapfig}
\usetikzlibrary{matrix,arrows,positioning}
\usepackage{lscape}
\usepackage{algorithm2e}
\usepackage{float}
\usepackage[normalem]{ulem}
\usepackage{comment}

\usetikzlibrary{shapes,decorations,arrows,calc,arrows.meta,fit,positioning}
\tikzset{
    -Latex,auto,node distance =1 cm and 1 cm,semithick,
    state/.style ={ellipse, draw, minimum width = 0.7 cm},
    point/.style = {circle, draw, inner sep=0.04cm,fill,node contents={}},
    bidirected/.style={Latex-Latex,dashed},
    el/.style = {inner sep=2pt, align=left, sloped}
}

\theoremstyle{plain}

\newtheorem{theorem}{Theorem}[section]

\newtheorem{lemma}[theorem]{Lemma}

\theoremstyle{definition}

\newtheorem{assumption}[theorem]{Assumption}


\definecolor{Red}{RGB}{225,0,0}

\definecolor{Blue}{RGB}{0,0,255}

\definecolor{Cyan}{RGB}{0,180,255}

\definecolor{Green}{RGB}{0,160,0}

\definecolor{Alert}{RGB}{255,122,0}

\definecolor{NavyBlue}{RGB}{0,100,175}

\definecolor{NavyRed}{RGB}{125,0,0}

\newcommand{\signot}{\sigma}

\newcommand{\eps}{\varepsilon}

\newcommand{\wt}{\widetilde}
\newcommand{\wh}{\widehat}

\newcommand{\argmin}{\mathop{\rm arg\min}}
\newcommand{\argmax}{\mathop{\rm arg\max}}

\newcommand{\cor}{\operatorname{cor}}

\newcommand{\Tr}{\operatorname{Tr}}

\newcommand{\rank}{\operatorname{rk}}

\newcommand{\diag}{\operatorname{diag}}

\newcommand{\ls}{\operatorname{LS}}

\newcommand{\pls}{\operatorname{PLS}}
\newcommand{\spa}{\operatorname{span}}

\newcommand{\E}{\mathbb{E}}

\newcommand{\R}{\mathbb{R}}

\newcommand{\mB}{\mathcal{B}}

\newcommand{\mK}{\mathcal{K}}

\newcommand{\mM}{\mathcal{M}}
\newcommand{\mN}{\mathcal{N}}

\newcommand{\mR}{\mathcal{R}}

\newcommand{\bzero}{\mathbf{0}}

\newcommand{\bA}{\mathbf{A}}
\newcommand{\bb}{\mathbf{b}}
\newcommand{\bB}{\mathbf{B}}

\newcommand{\be}{\mathbf{e}}

\newcommand{\bI}{\mathbf{I}}
\newcommand{\bk}{\mathbf{k}}
\newcommand{\bK}{\mathbf{K}}

\newcommand{\bP}{\mathbf{P}}
\newcommand{\bq}{\mathbf{q}}
\newcommand{\bQ}{\mathbf{Q}}
\newcommand{\br}{\mathbf{r}}

\newcommand{\bU}{\mathbf{U}}

\newcommand{\bw}{\mathbf{w}}

\newcommand{\bx}{\mathbf{x}}
\newcommand{\bX}{\mathbf{X}}
\newcommand{\by}{\mathbf{y}}

\newcommand{\balpha}{\bm{\alpha}}
\newcommand{\bbeta}{\bm{\beta}}

\newcommand{\beps}{\bm{\eps}}

\newcommand{\bzeta}{\bm{\zeta}}

\newcommand{\bsigma}{\boldsymbol{\sigma}}
\newcommand{\bSigma}{\boldsymbol{\Sigma}}

\addtolength{\textwidth}{1in}
\addtolength{\oddsidemargin}{-0.5in}
\addtolength{\textheight}{1in}
\addtolength{\topmargin}{-0.55in}

\newcommand{\footremember}[2]{%
   \footnote{#2}
    \newcounter{#1}
    \setcounter{#1}{\value{footnote}}%
}
\newcommand{\footrecall}[1]{%
    \footnotemark[\value{#1}]%
}


\begin{document}

\RestyleAlgo{ruled}
\SetKwComment{Comment}{/* }{ */}
\SetKw{KwIn}{\textbf{Input:}}
\SetKw{KwOut}{\textbf{Output:}}
\SetKw{And}{\textbf{and}}


\title{An extended latent factor framework \\ for ill-posed linear regression}

\author{%
    Gianluca Finocchio\footremember{uv}{Department of Statistics and Operations Research, Universit\"at Wien, Oskar-Morgenstern-Platz 1, 1090 Wien, Austria}%
    \ and Tatyana Krivobokova\footrecall{uv}%
}

\date{\today}
\maketitle

\begin{abstract}
In many applications, particularly in the natural sciences, the available high-dimen\-sional set of features may contain variables that are not correlated with the response under consideration. Such irrelevant features can, in certain cases, hinder both the accurate estimation and meaningful interpretation of the effects of the relevant features on the response. At the same time, the relevant features may also be well-approximated within a low-dimensional linear subspace, rendering the problem ill-posed. These observations motivate an extension of the classical latent factor model for linear regression. In this extended framework, it is assumed that, up to an unknown orthogonal transformation, the feature set comprises two subsets: one relevant and one irrelevant to the response. A joint low-dimensionality is imposed solely on the relevant features and the response variable. This setting enables the analysis of arbitrary linear dimensionality reduction techniques under a random design setting. In particular, it is demonstrated why principal component regression (PCR) is generally unsuitable for most applications. The framework also allows for a comprehensive analysis of the partial least squares (PLS) algorithm under random design. High-probability convergence rates are established for the sample PLS estimator with respect to an oracle latent coefficient vector, along with the corresponding linear prediction risk. Additionally, it is shown that early stopping can be guided by the empirical condition numbers of the projected design matrix. The theoretical results are validated through numerical studies on both real and simulated datasets.

\end{abstract}

\paragraph{Keywords:} {partial least squares, parsimonious dimension reduction, relevant features.} 

\textbf{MSC:} Primary: 65F22, 62H25; Secondary: 62B05, 65F10.
%

\section{Introduction}\label{sec:intro}
Many applications in natural sciences involve high-dimensional datasets consisting of a response vector and a design matrix of highly-correlated features. The goal of the practitioners is the identification of the linear combinations of features that can explain the response. Due to the ill-posedness of the problem, induced by the high correlation among the features, it is typically assumed that some low-dimensional latent variables determine both the features and the response. 
Thereby, it is typically overlooked that the available set of features might include also a subset, generally unknown and potentially high in variance, which is unrelated to the response variable of interest. 
For example, in genome-wide association studies (GWAS) reviewed by \cite{Uffelmann2021gen} practitioners observe the whole genome in order to study responses related to various diseases. Since the whole genome cannot be responsible for any single disease, it is reasonable to assume that many available features, also those having high variance, are uncorrelated with the response under consideration.

Another particularly prominent example are datasets obtained from molecular-dynamics (MD) simulations of biological systems, such as proteins, pioneered by \cite{Warshel1976} and \cite{McCammon1977} whose groundbreaking works led to the shared Nobel Prize in Chemistry 2013 awarded by \cite{Nobel2013}. These numerically simulated datasets $(\bX,\by)\in\R^{n\times p}\times\R^n$ consist of $n\geq1$ synthetic configurations $\bx_{i}\in\R^{3N}$ of $N\geq1$ atoms in the Euclidean space, thus $p=3N$ spatial coordinates, and functional quantities of interest $y_{i}\in\R$, for all $1\leq i\leq n$, which can be the distance between two sub-regions, the volume of a sub-region or any other geometric or physical observable. Again, depending on the goal of the study, different subsets of the protein atoms may be related to the response. For example, to explain a distance between two sub-regions of a protein, it is reasonable to assume that only atoms in vicinity to those regions can provide useful information. Thereby, the highest variation may still be attributed to the unrelated atoms.

In the regression setting with datasets from MD simulations, practitioners strive to identify linear reductions of the design matrix $\bX\in\R^{n\times p}$ that also preserve the information on the functional quantity $\by\in\R^n$. To this end, they have applied Principal Components Analysis (PCA) by \cite{Pearson1901} to estimate the leading collective motions of proteins, see the non-exhaustive list of works by \cite{Garca1992}, \cite{Amadei1993}, \cite{Berendsen2000}, \cite{Alakent2004} and \cite{Hub2009}. The use of PCA has become so prevalent that specific reviews on this topic have been published by \cite{David2013}, \cite{Kitao2022}, \cite{Palma2022} and \cite{Moradi2024}. An alternative procedure is Partial Least Squares (PLS) by \cite{Wold66non}, but only a few works by \cite{krivobokova2012} and the same research group rely on the PLS algorithm. 

As a case study, we revisit the findings of \cite{krivobokova2012} who considered data generated by the MD simulations for the yeast aquaporin (Aqy1), the gated water channel of the yeast \textit{Pichia pastoris}. The data are given as Euclidean coordinates of $N=783$ atoms observed at $n=20.000$ equidistant time points, together with the diameter of the channel $y_{i}$ measured by the distance between two centers of mass of certain residues of the protein $\bx_{i}$. The authors showed that the linear model $\by=\bX\bbeta+\beps$ is well-specified and the aim of the analysis was to identify the collective motions of the atoms that is maximally correlated to the channel opening in the sense of \cite{Hub2009}. The authors also compared the performance of PLS and Principal Component Regression (PCR) in such setting and found that only PLS but not PCR is able to detect important directions of motion. In a later work, \cite{singer2016partial} studied the PLS algorithm positing for the Aqy1 dataset the population latent factor model
\begin{align*}
    \bx = \bP\bq + \be,\quad y=\bq^t\balpha+\eps,
\end{align*}
for some random vector $\bq\in\R^m$, deterministic matrix $\bP\in\R^{m\times p}$ and vector $\balpha\in\R^m$, suitable random residuals $\be\in\R^p$ and $\eps\in\R$. Such latent factor models have been studied in detail by \cite{stock2002} and \cite{bai2002} and, under the regularity conditions discussed by \cite{fan2023}, it has been shown that the PCR method consistently estimates the latent structure. Under the same conditions, \cite{bing2021pre} established the finite-sample prediction risk of an adaptive PCR algorithm. Since latent factor models implicitly assume that only projections of the features along their main directions of variation matter for the response, there is no theoretical reason to believe that PLS should outperform PCR in the identification of the latent factors. This is in contrast with the heuristic findings by \cite{krivobokova2012} where PLS strongly outperforms PCR.

Classical latent factor models implicitly assume that the residual $\be\in\R^p$ only accounts for small directions of variation, thus do not allow for projections of the features along large directions of variation to be uncorrelated with the response. We build upon the previous work by \cite{finocchio2025mod} and provide a novel framework that extends the scope of latent factor models by including projections $\bx_{y}$ and $\bx_{y^\bot}$ of the features $\bx$ that are relevant and irrelevant for the response $y$; a latent factor model on the relevant pair $(\bx_{y},y)$ so that
\begin{align*}
    \bx=\bx_{y}+\bx_{y^\bot},\quad \bx_{y}=\bP\bq+\be,\quad y=\bq^t\balpha+\eps,
\end{align*}
where $\bx_{y^\bot}$ is allowed to have arbitrarily large variation. Differently from classical latent factor models, the model in the above display explains the difference in performance observed by \cite{krivobokova2012} when comparing PLS and PCR on their Aqy1 dataset. In fact, at the population level, the main directions of variation might correspond to projections along irrelevant directions for the response, making PCR fail in general.

We exploit this new framework to make the following contributions. The first one is to formalize what we call \textit{parsimonious} linear reductions of the relevant features $\bx_{y}$, which we compute from directions of steepest-descent of the least-squares functional in population. These reductions factorize the relevant features in terms of their projections onto low-dimensional linear subspaces that preserve most of the information on the response. The second contribution is to provide a transparent characterization of the PLS algorithm and show that it is inherently built to consistently estimate the parsimonious linear reductions of the relevant features. We develop the theory of the PLS method in this general setting and infer both finite-sample convergence rate and prediction risk, building on a previous work by \cite{finocchio2025mod}. We thus extend the most notable and recent contributions on the statistical properties of the PLS algorithm under random design due to \cite{singer2016partial}, who established the finite-sample convergence rates under the classical latent factor model, and to \cite{Cook2019par}, who established the asymptotic prediction risk under a classical linear model.

\section{Extended Latent Factor Framework}\label{sec:elf}
In this section we develop a novel notion of parsimonious linear combinations of features that are important for a response variable. Since the response might be determined by projections of the features on a possibly low-dimensional linear subspace, we formally distinguish between projections of the features that are relevant and irrelevant for the response. We borrow the framework for ill-posed least-squares regression from \cite{finocchio2025mod}. We consider a dataset $(\bX,\by)\in\R^{n\times p}\times\R^n$ consisting of $n\geq1$ i.i.d. realizations $(\bx_{i},y_{i})$ of the same population pair $(\bx,y)\in\R^p\times\R$ under the following assumption. In what follows, we denote $\R_{\succeq0}^{p\times p}$ the space of matrices that are symmetric and positive semidefinite.

\begin{assumption}[Model-Free, 2nd moments]\label{ass:x.y.model.free.2nd}
    The features $\bx\in\R^p$ are a random vector and the response $y\in\R$ is a random variable, they are both centered and have finite second moments $\bSigma_{\bx}=\E(\bx\bx^t)\in\R_{\succeq0}^{p\times p}$, $\bsigma_{\bx,y}=\E(\bx y)\in\R^p\setminus\{\bzero_{p}\}$ and $\sigma_{y}^2=\E(y^2)>0$. The features are possibly degenerate with $1\leq r_{\bx} = \rank(\bSigma_{\bx}) \leq p$.
\end{assumption}

Regardless of the true dependence between the features and the response one can show, see Lemma~\ref{lem:x.y.ls}, that the population least-squares problem
\begin{align}\label{eq:x.y.ls.pop}
    \ls(\bx,y) := \argmin_{\bbeta\in\R^p} \E(\bx^t\bbeta - y)^2 = \argmin_{\bbeta\in\R^p} \left\{\bbeta^t\bSigma_{\bx}\bbeta - 2\bbeta^t\bsigma_{\bx,y} + \sigma_{y}^2 \right\} =:\ls(\bSigma_{\bx},\bsigma_{\bx,y})
\end{align}
admits minimum-$L^2$-norm solution $\bbeta_{\ls}:=\bSigma_{\bx}^\dagger\bsigma_{\bx,y}\in\R^p$ and only depends on the second moments of the population pair $(\bx,y)$. The features $\bx$ belong almost surely to the range $\mR(\bSigma_{\bx})$, see Lemma~\ref{lem:x.y.cov}, and it has been shown, see Lemma~\ref{lem:ls.rel.pop}, that one can uniquely define the \textit{relevant} subspace $\mB_{y}$ as the smallest linear subspace $\wt{\mB}$ of $\mR(\bSigma_{\bx})$ for which the projected features $\bx_{\wt{\mB}^\bot}$ along the complement $\wt{\mB}^\bot$ are uncorrelated with both the response $y$ and the the projection $\bx_{\wt{\mB}}$ of the features on $\wt{\mB}$. Formally, this corresponds to 
\begin{align}\label{eq:rel.sub}
    \mB_{y} := \argmin \Big\{\dim(\wt{\mB}) : \mR(\bSigma_{\bx})=\wt{\mB}\oplus\wt{\mB}^\bot,\ \E(\bx_{\wt{\mB}^\bot} y) = \bzero_{p},\ \E(\bx_{\wt{\mB}^\bot} \bx_{\wt{\mB}}^t) = \bzero_{p\times p}\Big\}.
\end{align}
We denote $\bU_{y}$ and $\bU_{y^\bot}$ the orthogonal projections onto $\mB_{y}$ and $\mB_{y}^\bot$, respectively. We call relevant features the projection $\bx_{y} := \bU_{y}\bx$ and irrelevant features the projection $\bx_{y^\bot} := \bU_{y^\bot}\bx$. This induces the orthogonal decompositions
\begin{align}\label{eq:x.rel.x.irr}
    \R^p = \mR(\bSigma_{\bx}) \oplus \mR(\bSigma_{\bx})^\bot,\quad \mR(\bSigma_{\bx}) = \mB_{y} \oplus \mB_{y}^\bot, \quad \bx = \bx_{y} + \bx_{y^\bot} \in \mR(\bSigma_{\bx}).
\end{align}  
By construction, see Lemma~\ref{lem:ls.rel.pop}, the relevant subspace preserves the population least-squares problem in Equation~\eqref{eq:x.y.ls.pop} in the sense that $\ls(\bx,y)=\ls(\bx_{y},y)$ and the population least-squares solution becomes $\bbeta_{\ls}=\bSigma_{\bx_{y}}^\dagger\bsigma_{\bx_{y},y}$. The latter only depends on the moments of the relevant population pair $(\bx_{y},y)$, thus the relevant subspace $\mB_{y}=\mR(\bSigma_{\bx_{y}})$ has dimension $r_{y}:=\rank(\bSigma_{\bx_{y}})$. When some low-dimensionality is at play, one expects the relevant subspace to be ill-posed in the sense that the condition number $\kappa_{2}(\bSigma_{\bx_{y}})$ is arbitrarily large. With $\lambda_{1}(\bSigma_{\bx_{y}})\geq\cdots\geq\lambda_{r_{y}}(\bSigma_{\bx_{y}})>\lambda_{r_{y}+1}(\bSigma_{\bx_{y}})=\cdots=\lambda_{p}(\bSigma_{\bx_{y}})=0$ the sorted eigenvalues of $\bSigma_{\bx_{y}}$, the condition number $\kappa_{2}(\bSigma_{\bx_{y}})=\lambda_{1}(\bSigma_{\bx_{y}})/\lambda_{r_{y}}(\bSigma_{\bx_{y}})$ is always the ratio between the largest and smallest non-zero eigenvalues.

Our goal is to define parsimonious linear combinations of the relevant features $\bx_{y}$ that factorize the population least-squares solution $\bbeta_{\ls}\in\mB_{y}$ in terms of its projections onto $s$-dimensional linear subspaces for all $1\leq s\leq r_{y}$ that capture as much as possible of the dependence between the features and the response, in a sense to be specified below.

\subsection{Extended Latent Factor Linear Models}\label{sec:lm}
There are many applications where a linear model can be posited as the underlying generating process in the sense that 
\begin{align}\label{eq:x.y.lm}
    y=\bx^t\bbeta+\eps
\end{align}
with $\bbeta\in\R^p$ a vector of effects and $\eps\in\R$ a random residual. An example is the molecular-dynamics simulation of Aqy1 studied, among other protein systems, by \cite{krivobokova2012} where the features are configurations of atoms in the Euclidean space and the response is the distance between two atoms in the same small region of space. Another example is the genome-wide association study on BMI by \cite{Locke2015gen} where the features are gene expressions of individuals and the response is the corresponding body mass index. The common strategy of these papers is to estimate the vector of effects $\bbeta$ in order to identify and interpret linear combinations of features that are important for the response. In such problems it is common for the features to be highly correlated, therefore it is more appropriate to posit a latent factor linear model
\begin{align}\label{eq:x.y.lfm}
    y=\bq^t\balpha+\eps,\quad \bx=\bq+\signot\be,
\end{align}
where $\bq\in\R^p$ is a centered random vector with rank $1\leq r_{\bq}=\rank(\bSigma_{\bq})\leq r_{\bx}$ and range $\mR(\bSigma_{\bq})\subseteq\mR(\bSigma_{\bx})$, $\balpha\in\R^p$ a vector of latent coefficients, $\eps\in\R$ an independent random variable that is centered, $\signot\geq0$ a noise parameter, $\be\in\R^p$ an independent random vector that is centered with full range $\mR(\bSigma_{\be})=\mR(\bSigma_{\bx})$ and normalized with $\lambda_{1}(\bSigma_{\be})=1$. The latter display holds without loss of generality since one recovers the classical linear model in Equation~\eqref{eq:x.y.lm} when $\bq=\bx$, $\balpha=\bbeta$ and $\signot=0$. To fix the ideas, in the Aqy1 dataset by \cite{krivobokova2012} the latent features are atoms in the vicinity of the region where the response is computed, whereas in the BMI dataset by \cite{Locke2015gen} the latent features are genotypes that correlate with body weight.

Under latent factor models one finds moments $\bsigma_{\bx,y}=\bsigma_{\bq,y}$ and $\bSigma_{\bx}=\bSigma_{\bq}+\signot^2\bSigma_{\be}$ and it is standard to assume that the noise level $\signot\geq0$ is sufficiently separated from the variance of the latent features $\bq$ in the sense that $\signot^2<\lambda_{r_{\bq}}(\bSigma_{\bq})$. All the information on the dependence between the features and the response is fully contained in the oracle linear subspace $\mR(\bSigma_{\bq})\subseteq\mR(\bSigma_{\bx})$ which is the $r_{\bq}$-dimensional range of the latent features. A small noise level makes the model ill-posed since the features are almost degenerate. When the noise level is sufficiently small, the $r_{\bq}$-dimensional principal eigenspace of $\bSigma_{\bx}$ is close to the range $\mR(\bSigma_{\bq})$ so that the response $y$ only depends on projections of the features $\bx$ along main directions of variation. 

When the features are high-dimensional, they typically contain a lot of information that is not useful for the specific response that is being studied. It is also unknown which combinations of features contain useful information on the response and it is too restrictive to assume that they are aligned with the directions of largest variation of the features. For the Aqy1 dataset studied by \cite{krivobokova2012} it is conceivable that only atoms that are in the same region of the response are relevant, whereas atoms that are further away are irrelevant despite having non-negligible variation. For the BMI dataset by \cite{Locke2015gen} it is conceivable that only gene expressions that are correlated with body weight are relevant for the response, whereas the others provide no information irrespective of their variation. Since the classical latent factor model does not allow for irrelevant features to have a large variation, we propose the extended latent factor linear model where only the relevant pair $(\bx_{y},y)$ satisfies Equation~\eqref{eq:x.y.lfm} and the features in Equation~\eqref{eq:x.rel.x.irr} become
\begin{align}\label{eq:x.y.elfm}
    y=\bq^t\balpha+\eps,\quad \bx=\bx_{y}+\bx_{y^\bot}=\bq+\signot\be+\bx_{y^\bot}, 
\end{align}
where $\bq\in\R^p$ is a centered random vector with rank $1\leq r_{\bq}=\rank(\bSigma_{\bq})\leq r_{y}$ and range $\mR(\bSigma_{\bq})\subseteq\mB_{y}$, $\balpha\in\R^p$ a vector of latent coefficients, $\eps\in\R$ an independent random variable that is centered, $\signot\geq0$ a noise parameter, $\be\in\R^p$ an independent random vector that is centered with full range $\mR(\bSigma_{\be})=\mB_{y}$ and normalized with $\lambda_{1}(\bSigma_{\be})=1$. Again, the latter display holds without loss of generality since one recovers the latent factor linear model in Equation~\eqref{eq:x.y.lfm} when $\bx_{y}=\bx$ and $\bx_{y^\bot}=\bzero_{p}$.

Under extended latent factor models one finds moments $\bsigma_{\bx,y}=\bsigma_{\bq,y}$ and $\bSigma_{\bx}=\bSigma_{\bq}+\signot^2\bSigma_{\be}+\bSigma_{\bx^\bot}$ and it is still natural to assume that $\signot^2<\lambda_{r_{\bq}}(\bSigma_{\bq})$. Even when the noise level is small, no restriction is imposed on the covariance $\bSigma_{\bx^\bot}$ of the irrelevant features $\bx_{y^\bot}$ and the $r_{\bq}$-dimensional principal eigenspace of $\bSigma_{\bx}$ might be far from the oracle linear subspace $\mR(\bSigma_{\bq})\subseteq\mB_{y}$. Notice that our extended model coincides with the classical model if and only if the irrelevant features $\bx_{y^\bot}$ are trivially zero, meaning that all features $\bx=\bx_{y}$ are correlated with the response. In general, the presence of the irrelevant features complicates the analysis since the partition $\bx=\bx_{y}+\bx_{y^\bot}$ is unknown and the covariance $\bSigma_{\bx^\bot}$ of the irrelevant features $\bx_{y^\bot}$ is arbitrary.

Under the well-specified model in Equation~\eqref{eq:x.y.elfm} it is natural to consider the oracle projection of $\bbeta_{\ls}\in\mB_{y}$, that is to say, the vector $\bU_{\bq} \bbeta_{\ls}$ where $\bU_{\bq}$ is the orthogonal projection of $\R^p$ onto the oracle range $\mR(\bSigma_{\bq})$. We show in Lemma~\ref{lem:x.y.elfm} that this coincides with the solution of the population least-squares problem $\ls(\bx_{\bq},y)$ computed from the oracle projection of the features $\bx_{\bq}:=\bU_{\bq}\bx_{y}+\bU_{\bq}\bx_{y^\bot}=\bq+\signot\bU_{\bq}\be+\bzero_{p}$. We also show that, with a signal-to-noise ratio $\lambda_{r_{\bq}}(\bSigma_{\bq})/\signot^2 > 2$, the projected vector $\bU_{\bq}\bbeta_{\ls}$ has an approximation error for the solution $\balpha_{\ls}:=\bSigma_{\bq}^\dagger\bsigma_{\bq,y}$ of the latent population least-squares problem $\ls(\bq,y)$ that is proportional to the inverse signal-to-noise ratio.

\subsection{Oracle Parsimonious Linear Reduction}\label{sec:plr}
We strive for a notion of parsimonious linear reduction that can be defined for general data generating processes on the relevant population pair $(\bx_{y},y)$ under Assumption~\ref{ass:x.y.model.free.2nd}. We propose an inductive definition that exploits the gradient of the least-squares functional $\bbeta\mapsto\ell_{\bx_{y},y}(\bbeta):=\E(y-\bx_{y}^t\bbeta)^2$ defined for all $\bbeta\in\R^p$. This gradient is $\bbeta\mapsto\nabla_{\bbeta}\ell_{\bx_{y},y}(\bbeta):=2\bSigma_{\bx_{y}}\bbeta-2\bsigma_{\bx_{y},y}\in\R^p$ and, by definition of relevant subspace in Equation~\eqref{eq:rel.sub}, one finds $\nabla_{\bbeta}\ell_{\bx_{y},y}(\bbeta)\in\mR(\bSigma_{\bx_{y}})=\mB_{y}$ for all $\bbeta\in\R^p$. Starting with the trivial direction $\bw_{0}:=\bzero_{p}\in\mB_{y}$, the trivial subspace $\mB_{0}:=\{\bzero_{p}\}\subseteq\mB_{y}$ and the trivial parameter $\bbeta_{0}:=\bzero_{p}\in\mB_{0}$, we look for the direction $\bw_{1}\in\mB_{y}$ of steepest descent for the functional $\ell_{\bx_{y},y}(\cdot)$ at the point $\bbeta_{0}$. This corresponds to the negative gradient $\bw_{1}:=-\nabla_{\bbeta}\ell_{\bx_{y},y}(\bbeta_{0})$ and we can define the linear subspace $\mB_{1}:=\spa\{\bw_{0},\bw_{1}\}$ and least-squares solution $\bbeta_{1}:=\argmin_{\bbeta\in\mB_{1}}\ \ell_{\bx_{y},y}(\bbeta)$. By iterating such procedure for all $1\leq s\leq r_{y}$, we formally define
\begin{align}\label{eq:x.rel.y.B.s.grad}
    \bw_{s}:=-\nabla_{\bbeta}\ell_{\bx_{y},y}(\bbeta_{s-1}),\quad \mB_{s}:=\spa\{\bw_{0},\ldots,\bw_{s}\},\quad \bbeta_{s}:=\argmin_{\bbeta\in\mB_{s}}\ \ell_{\bx_{y},y}(\bbeta). 
\end{align}
From the numerical theory established by \cite{hestenes1952met}, \cite{Allwright1976} and \cite{hanke1995conj} on conjugate gradient methods, the aforementioned linear subspaces span the population Krylov subspaces
\begin{align}\label{eq:x.rel.y.B.s.krylov}
    \mB_{s} = \spa\big\{\bsigma_{\bx_{y},y},\ldots,\bSigma_{\bx_{y}}^{s-1}\bsigma_{\bx_{y},y}\big\} =: \mK_{s}(\bx_{y},y),\quad 1\leq s\leq r_{y},
\end{align}
so that $\mB_{1}=\spa\{\bsigma_{\bx_{y},y}\}$ is also the direction of maximal correlation between the relevant features $\bx_{y}$ and the response $y$, and $\bsigma_{\bx_{y},y}\neq\bzero_{p}$ by Assumption~\ref{ass:x.y.model.free.2nd}. With $m_{y}:=\dim(\mB_{r_{y}})$ and $d_{y}:=\deg(\bSigma_{\bx_{y}})$ the number of unique non-zero eigenvalues of the covariance matrix $\bSigma_{\bx_{y}}$, we find the relationship $1\leq m_{y}\leq d_{y}\leq r_{y}$. That is to say, the sequence of linear subspaces $\mB_{1}\subsetneq\cdots\subsetneq\mB_{m_{y}}$ is strictly monotone. For all $1\leq s\leq m_{y}$, we define $\mB_{s}$ as the \textit{$s$-parsimonious linear reduction} of the relevant features. We denote $\bx_{s}$ the orthogonal projection of the relevant features $\bx_{y}$ onto $\mB_{s}$ and $\bbeta_{s}$ the solution of the population least-squares problem $\ls(\bx_{s},y)$. We call \textit{best parsimonious linear reduction} of the relevant features the linear subspace $\mB_{s_{0}}$ where
\begin{align}\label{eq:x.rel.y.B.s.star}
    s_{0} := \min \Big\{\argmin_{1\leq s\leq m_{y}}\ \E(y-\bx_{s}^t\bbeta_{s})^2 \Big\}.
\end{align}
The $\argmin$ in the above display is a set that might contain multiples solutions, thus $s_{0}$ is the smallest dimension for which the minimal linear least-squares residual is achieved. With $\bx_{s_{0}}$ the orthogonal projection of $\bx_{y}$ onto $\mB_{s_{0}}$, the \textit{best parsimonious parameter} $\bbeta_{s_{0}}\in\mB_{s_{0}}$ is the minimum-$L^2$-norm solution of the population least-squares problem $\ls(\bx_{s_{0}},y)$. 

Notice that the linear subspaces $\mB_{s}$ are not necessarily optimal in the least-squares sense. In fact, despite $\mB_{1}=\spa\{\bsigma_{\bx_{y},y}\}$ being the direction of maximal correlation it is easy to check that the optimal $1$-dimensional linear subspace of $\mB_{y}$ where the smallest least-squares residual is attained is $\spa\{\bbeta_{\ls}\}$. However, the latter trivially contains the population least-squares solution $\bbeta_{\ls}$ and nothing meaningful can be said about this projection.

To validate our proposal, we compare our construction of $s$-parsimonious linear reductions with the oracle linear subspace provided by the extended latent factor model in Equation~\eqref{eq:x.y.elfm}. Recall that this model assumes $y=\bq^t\balpha+\eps$ and $\bx=\bx_{y}+\bx_{y^\bot}=\bq+\signot\be+\bx_{y^\bot}$ with moments $\bsigma_{\bx,y}=\bsigma_{\bq,y}$ and $\bSigma_{\bx}=\bSigma_{\bq}+\signot^2\bSigma_{\be}+\bSigma_{\bx^\bot}$. The population Krylov spaces in Equation~\eqref{eq:x.rel.y.B.s.krylov} become $\mB_{s}=\mK_{s}(\bSigma_{\bq}+\signot^2\bSigma_{\be},\bsigma_{\bq,y})$ and are perturbed versions of the latent $\mK_{s}(\bSigma_{\bq},\bsigma_{\bq,y})$. Without loss of generality the latent range spans the whole latent Krylov space $\mR(\bSigma_{\bq})=\mK_{r_{\bq}}(\bSigma_{\bq},\bsigma_{\bq,y})$. Now consider the $r_{\bq}$-parsimonious parameter $\bbeta_{r_{\bq}}\in\mB_{r_{\bq}}$ in Equation~\eqref{eq:x.rel.y.B.s.krylov} where $r_{\bq}$ is the rank of the latent features $\bq$. We show in Lemma~\ref{lem:x.y.plr.elfm} that for a signal-to-noise ratio $\lambda_{r_{\bq}}(\bSigma_{\bq})/\signot^2 > 4$ the $r_{\bq}$-parsimonious parameter $\bbeta_{r_{\bq}}$ has an approximation error for the solution $\balpha_{\ls}$ of the latent population least-squares problem $\ls(\bq,y)$ that is proportional to the inverse signal-to-noise ratio. Up to a constant, this is the same approximation error we found in Lemma~\ref{lem:x.y.elfm} for the oracle projection $\bU_{\bq}\bbeta_{\ls}$ we discussed at the end of the previous section. Lastly, when the noise level $\signot\geq0$ is sufficiently small, the vector $\bbeta_{r_{\bq}}\in\mB_{r_{\bq}}$ is the best approximation of $\balpha_{\ls}\in\mR(\bSigma_{\bq})$ among all $\bbeta_{s}\in\mB_{s}$ over $1\leq s\leq m_{y}$. When this is true, we show in Lemma~\ref{lem:x.rel.y.B.s.latent} that the minimal dimension $s_{0}$ in Equation~\eqref{eq:x.rel.y.B.s.star} is at most the rank $r_{\bq}$ of the latent features $\bq$.

\section{Partial Least Squares}\label{sec:pls}
In this section we investigate the performance of the PLS algorithm in estimating the best parsimonious parameter $\bbeta_{s_{0}}\in\mB_{s_{0}}$ under a model-free setting or the oracle latent parameter $\balpha_{\ls}\in\mR(\bSigma_{\bq})$ under an extended latent factor model.

\subsection{Population Partial Least Squares}\label{sec:pls:pop}
The population PLS algorithm $\pls(\bx,y) := \pls(\bSigma_{\bx},\bsigma_{\bx,y})$ only depends on the moments of the population pair $(\bx,y)$ and does not have any knowledge on the partition of $\bx$ into relevant $\bx_{y}$ and irrelevant $\bx_{y^\bot}$ from Equation~\eqref{eq:x.rel.x.irr}. Following \cite{Wold66non} and \cite{helland1990pls}, the population PLS algorithm computes the minimum-$L^2$-norm least-squares solutions on the population Krylov subspaces  $\bbeta_{\pls,s}\in\mK_{s}(\bx,y)=\spa\{\bsigma_{\bx,y},\ldots,\bSigma_{\bx}^{s-1}\bsigma_{\bx,y}\}\subseteq\mR(\bSigma_{\bx})$ for all $1\leq s\leq p$. With $m_{\bx}:=\dim(\mK_{p}(\bx,y))$ and $d_{\bx}=\deg(\bSigma_{\bx})$ the number of unique non-zero eigenvalues of $\bSigma_{\bx}$, we find $1\leq m_{\bx}\leq d_{\bx}\leq r_{\bx}$. We prove the following adaptivity result in Section~\ref{app:proof:pls}.

\begin{lemma}\label{lem:x.y.pls.pop}
    Let $(\bx,y)\in\R^p\times\R$ satisfy Assumption~\ref{ass:x.y.model.free.2nd}. The population PLS algorithm is adaptive in the sense that $\pls(\bx,y)=\pls(\bx_{y},y)$. That is to say, $\mK_{s}(\bx,y)=\mK_{s}(\bx_{y},y)$ for all $1\leq s\leq r_{\bx}$.  
\end{lemma}

An immediate consequence of the above result is that the population PLS algorithm $\pls(\bx,y)$ recovers exactly the $s$-parsimonious linear subspaces $\mK_{s}(\bx,y)=\mB_{s}$ in Equation~\eqref{eq:x.rel.y.B.s.krylov} and the corresponding $s$-parsimonious parameters $\bbeta_{\pls,s}=\bbeta_{s}\in\mB_{s}$ solving the population least-squares problem $\ls(\bx_{s},y)$. We prove the following in Section~\ref{app:proof:pls}.

\begin{theorem}\label{thm:x.y.pls.pop}
    Let $(\bx,y)\in\R^p\times\R$ satisfy Assumption~\ref{ass:x.y.model.free.2nd}. Let $\bbeta_{\pls,s}$ be the coefficients computed by the population PLS algorithm $\pls(\bx,y)$ for all $1\leq s\leq r_{\bx}$. With $\bbeta_{s_{0}}\in\mB_{s_{0}}$ the best parsimonious parameter induced by Equation~\eqref{eq:x.rel.y.B.s.star}, then
    \begin{align*}
        \frac{\|\bbeta_{\pls,s}-\bbeta_{s_{0}}\|_{2}}{\|\bbeta_{s_{0}}\|_{2}} &\leq \sqrt{s_{0}-s},
    \end{align*}
    for all $1\leq s\leq s_{0}$.
\end{theorem}

In the next section we study the sample PLS algorithm and show that its parameters $\wh{\bbeta}_{\pls,s}$ computed with $1\leq s\leq s_{0}$ degrees-of-freedom converge in probability to the corresponding population parameters $\bbeta_{\pls,s}$. The above result thus quantifies the bias of the sample PLS solutions $\wh{\bbeta}_{\pls,s}$ with respect to the best parsimonious parameter $\bbeta_{s_{0}}$. This holds for all choices of degrees-of-freedom and does not rely on heuristic stopping rules. In particular, it shows that the sample PLS solution $\wh{\bbeta}_{\pls,s_{0}}$ using exactly $s_{0}$ degrees-of-freedom is an unbiased estimator of the best parsimonious parameter. This is true in the most general model-free setting where the dependence between the features and the response is arbitrary. 

Under the extended latent factor model in Equation~\eqref{eq:x.y.elfm} and a signal-to-noise ratio $\lambda_{r_{\bq}}(\bSigma_{\bq})/\signot^2>4$, we show in Section~\ref{app:proof:pls} the following.

\begin{theorem}\label{thm:x.y.pls.pop.elfm}
    Let $(\bx,y)\in\R^p\times\R$ satisfy Assumption~\ref{ass:x.y.model.free.2nd}. Let $\bbeta_{\pls,s}$ be the coefficients computed by the population PLS algorithm $\pls(\bx,y)$ for all $1\leq s\leq r_{\bx}$. Under the extended latent factor model in Equation~\eqref{eq:x.y.elfm} let $\balpha_{\ls}\in\mR(\bSigma_{\bq})$ be the minimum-$L^2$-norm solution of the latent population least-squares problem $\ls(\bq,y)$. With $r_{\bq}=\rank(\bSigma_{\bq})$ the rank of the latent features and some constant $C_{r_{\bq}}\geq1$, if $\signot^2<\lambda_{r_{\bq}}(\bSigma_{\bq})/2\{C_{r_{\bq}}+1\}$, then
    \begin{align*}
        \frac{\|\bbeta_{\pls,s}-\balpha_{\ls}\|_{2}}{\|\balpha_{\ls}\|_{2}} &\leq \frac{7}{2}\sqrt{r_{\bq}-s} + 5\ \{C_{r_{\bq}}+1\}\ \frac{\signot^2}{\lambda_{r_{\bq}}(\bSigma_{\bq})},
    \end{align*}
    for all $1\leq s\leq r_{\bq}$.
\end{theorem}

The above result measures the bias of the sample PLS solutions $\wh{\bbeta}_{\pls,s}$ using $1\leq s\leq r_{\bq}$ degrees-of-freedom with respect to the oracle latent parameter $\balpha_{\ls}$. The PLS method does not have any prior knowledge on the latent features nor on the partition of the features into relevant and irrelevant parts. The above theorem shows that the sample PLS solution $\wh{\bbeta}_{\pls,r_{\bq}}$ using exactly $r_{\bq}$ degrees-of-freedom attains a bias that is equal, up to the factor $C_{r_{\bq}}+1\geq2$, to the oracle approximation error obtained in Lemma~\ref{lem:x.y.elfm} for the oracle projection $\bU_{\bq}\bbeta_{\ls}$ of the population least-squares solution $\bbeta_{\ls}$ onto the oracle linear subspace $\mR(\bSigma_{\bq})$ which is the range of the latent features $\bq$. 

Under the same setting, with a larger signal-to-noise ratio $\lambda_{r_{\bq}}(\bSigma_{\bq})/\signot^2>2\tau$ for some $\tau\geq8$, we establish the following stopping rule for the population PLS algorithm. For a proof see Section~\ref{app:proof:pls}.

\begin{theorem}\label{thm:x.y.pls.pop.elfm.stop}
   Under the assumptions of Theorem~\ref{thm:x.y.pls.pop.elfm}, let $\bU_{s}$ be the orthogonal projection of $\R^p$ onto the population Krylov space $\mK_{s}(\bSigma_{\bx},\bsigma_{\bx,y})$ for all $1\leq s\leq p$. Furthermore, assume that $\signot^2<\lambda_{r_{\bq}}(\bSigma_{\bq})/\tau\{C_{r_{\bq}}+1\}$ for some $\tau\geq8$. Then the population early-stopping dimension
   \begin{align*}
       m_{\bq} := \min\left\{1\leq s\leq p-1: \frac{\kappa_{2}(\bU_{s+1}\bSigma_{\bx}\bU_{s+1})}{\kappa_{2}(\bU_{s}\bSigma_{\bx}\bU_{s})} > \tau-2 \right\}
   \end{align*}
   satisfies $m_{\bq}\leq r_{\bq}$.
\end{theorem}

The above result establishes a stopping rule for the population PLS algorithm. In particular, it shows that one should consider the condition numbers $\kappa_{s}:=\kappa_{2}(\bU_{s}\bSigma_{\bx}\bU_{s})$ computed for all degrees-of-freedom $1\leq s\leq r_{\bx}$ and take the first one for which $\kappa_{s+1}/\kappa_{s}>\tau-2$. Here the quantity $\tau\geq8$ is meant to be known, but one can replace this with the agnostic $\kappa_{s+1}/\kappa_{s}>6$ corresponding to $\tau=8$ instead. Notice that we are not imposing any additional restriction on the decay or separation of the eigenvalues of the covariance $\bSigma_{\bx}$ of the features. Stronger assumptions such as polynomial or exponential decay would allow for larger gaps in the ratios of conditioning numbers.

\subsection{Sample Partial Least Squares}\label{sec:pls:sam}
Consider a dataset $(\bX,\by)\in\R^{n\times p}\times\R^n$ consisting of $n\geq1$ i.i.d. realizations $(\bx_{i},y_{i})$ of the same population pair $(\bx,y)\in\R^p\times\R$ under Assumption~\ref{ass:x.y.model.free.2nd}. In this section we investigate the performance of the sample PLS algorithm $\wh{\pls}(\bx,y) := \pls(\wh{\bSigma}_{\bx},\wh{\bsigma}_{\bx,y})$ that depends only on the sample moments $\wh{\bSigma}_{\bx}:=n^{-1}\bX^t\bX$ and $\wh{\bsigma}_{\bx,y}:=n^{-1}\bX^t\by$ estimated from the dataset $(\bX,\by)$. Following \cite{Wold66non} and \cite{helland1990pls}, the sample PLS algorithm computes the minimum-$L^2$-norm least-squares solutions on the sample Krylov subspaces $\wh{\bbeta}_{\pls,s}\in\wh{\mK}_{s}(\bx,y)=\spa\{\wh{\bsigma}_{\bx,y},\ldots,\wh{\bSigma}_{\bx}^{s-1}\wh{\bsigma}_{\bx,y}\}$ for all $1\leq s\leq p$. With $\wh{m}_{\bx}:=\dim(\wh{\mK}_{p}(\bx,y))$ and $\wh{d}_{\bx}=\deg(\wh{\bSigma}_{\bx})$ the number of unique non-zero eigenvalues of $\wh{\bSigma}_{\bx}$, we find $1\leq \wh{m}_{\bx}\leq \wh{d}_{\bx}\leq \wh{r}_{\bx} := \rank(\wh{\bSigma}_{\bx})$. In what follows, $\|\cdot\|_{op}$ is the operator norm for matrices. We denote
\begin{align} \label{eq:x.y.eps.hat}
    \wh{\eps}(\bx,y) := \frac{\|\wh{\bSigma}_{\bx}-\bSigma_{\bx}\|_{op}}{\|\bSigma_{\bx}\|_{op}} \vee \frac{\|\wh{\bsigma}_{\bx,y}-\bsigma_{\bx,y}\|_{2}}{\|\bsigma_{\bx,y}\|_{2}},
\end{align}
the size of the perturbation between the sample moments $\wh{\bSigma}_{\bx},\wh{\bsigma}_{\bx,y}$ and the population moments $\bSigma_{\bx},\bsigma_{\bx,y}$. 

\begin{assumption}[Model-Free, 4th moments]\label{ass:x.y.model.free.4th}
    Let $(\bx,y)\in\R^p\times\R$ satisfy Assumption~\ref{ass:x.y.model.free.2nd}. The response $y$ and the projected features $\bx_{\wt{\mB}}=\bU_{\wt{\mB}}\ \bx$, for any linear subspace $\wt{\mB}\subseteq\mR(\bSigma_{\bx})$, have finite moment-ratios
    \begin{align*}
        L_{y} :=\frac{\E(y^4)^{\frac{1}{4}}}{\E(y^2)^{\frac{1}{2}}},\quad
        L_{\wt{\mB}} := \frac{\E(\|\bx_{\wt{\mB}}\|_{2}^4)^{\frac{1}{4}}}{\E(\|\bx_{\wt{\mB}}\|_{2}^2)^{\frac{1}{2}}},
    \end{align*}
    with the convention that $L_{\wt{\mB}}$ is set to one if the denominator is zero.
\end{assumption}

Let $(\bx,y)\in\R^p\times\R$ satisfy Assumption~\ref{ass:x.y.model.free.4th}. From now on, we are interested in the geometrical properties of the projected features $\bx_{\wt{\mB}}$ where the linear subspace $\wt{\mB}$ is either $\mB_{s}$, $\mB_{s}^\bot$ or $\mB_{y}^\bot$ for some fixed $1\leq s\leq r_{y}$ as in Equation~\eqref{eq:x.rel.y.B.s.krylov}. For any such $\wt{\mB}$, we denote
\begin{align}\label{eq:x.proj.geom}
    r_{\wt{\mB}} := \rank(\bSigma_{\bx_{\wt{\mB}}}),\quad 
    \rho_{\wt{\mB}} := \frac{\E(\|\bx_{\wt{\mB}}\|_{2}^2)}{\|\bSigma_{\bx_{\wt{\mB}}}\|_{op}},\quad 
    \rho_{\wt{\mB},n} := \frac{\E\big(\max_{1\leq i\leq n} \|\bx_{\wt{\mB},i}\|_{2}^2\big)}{\|\bSigma_{\bx_{\wt{\mB}}}\|_{op}}.
\end{align}
The rank $r_{\wt{\mB}}$ is the dimension of the span of the support of $\bx_{\wt{\mB}}$. The effective rank $\rho_{\wt{\mB}} \leq r_{\wt{\mB}}$ can be rewritten as the weighted average $\Tr(\bSigma_{\bx_{\wt{\mB}}})/\|\bSigma_{\bx_{\wt{\mB}}}\|_{op}$ and measures the interplay between dimension and variation. The uniform effective rank $\rho_{\wt{\mB},n}$ accounts for the variability of a sample of i.i.d. realizations of $\bx_{\wt{\mB}}$. We select the linear subspace among $\mB_{s}$, $\mB_{s}^\bot$ or $\mB_{y}^\bot$ corresponding to the largest variation
\begin{align} \label{eq:x.y.B.star}
    \wt{\mB}_{s} := \argmax \left\{L_{\wt{\mB}}\ \|\bSigma_{\bx_{\wt{\mB}}}\|_{op}\ \rho_{\wt{\mB},n} : \wt{\mB}\in\{\mB_{s}, \mB_{s}^\bot, \mB_{y}^\bot\} \right\}
\end{align}
and define the sequence
\begin{align} \label{eq:x.y.delta.star.n}
    \delta_{\wt{\mB}_{s},n} := \sqrt{\frac{\rho_{\wt{\mB}_{s},n} \log r_{\bx}}{n}},
\end{align}
summarizing the intrinsic geometrical complexity. 

\cite{finocchio2025mod} defined a notion of stability for the population PLS algorithm $\pls(\bx,y)$ computed from a pair $(\bx,y)$ under Assumption~\ref{ass:x.y.model.free.4th}. In what follows, we denote $\wt{C}_{s}\geq1$ such stability constants and let
\begin{align*}
    \wt{M}_{s} := 2 \cdot \kappa_{2}(\bSigma_{\bx_{s}}) \cdot \{4\ \wt{C}_{s} + 1\} \cdot \left\{\frac{\|\bSigma_{\bx}\|_{op}}{\|\bSigma_{\bx_{s}}\|_{op}} \vee \frac{\|\bsigma_{\bx,y}\|_{2}}{\|\bsigma_{\bx_{s},y}\|_{2}}\right\}.
\end{align*} 
for all $1\leq s\leq m_{y}$. 

\begin{assumption}[Sample PLS Algorithm]\label{ass:x.y.pls.sam}
    We assume that:
    \begin{enumerate}[label=(\roman*),itemsep=0.25em,topsep=0.25em]
        \item the sample PLS algorithm is compatible with the population PLS algorithm, in the sense that $\dim(\wh{\mK}_{p}(\bx,y))\geq\dim(\mK_{p}(\bx,y))$, \label{ass:x.y.pls.sam.dof}
        \item with $\wt{\mB}_{s}$ the leading linear subspace among $\mB_{s}, \mB_{s}^\bot, \mB_{y}^\bot$ in the sense of Equation~\eqref{eq:x.y.B.star}, $\delta_{\wt{\mB}_{s},n}$ the corresponding complexity in Equation~\eqref{eq:x.y.delta.star.n}, some absolute constant $C\geq1$,
        \begin{align*}
            K_{\wt{\mB}_{s}} &:= 99 C L_{y} L_{\wt{\mB}_{s}} \left\{\frac{\sigma_{y} \|\bSigma_{\bx_{\wt{\mB}_{s}}}\|_{op}^\frac{1}{2}}{\|\bsigma_{\bx,y}\|_{2}} \vee \frac{\|\bSigma_{\bx_{\wt{\mB}_{s}}}\|_{op}}{\|\bSigma_{\bx}\|_{op}} \right\},
        \end{align*}
        it holds
        \begin{align*}
            \delta_{\wt{\mB}_{s},n} \xrightarrow{n\to\infty} 0,\quad \nu_{\wt{\mB}_{s},n} := \wt{M}_{s} K_{\wt{\mB}_{s}} \delta_{\wt{\mB}_{s},n} < \frac{1}{2}.
        \end{align*} \label{ass:x.y.pls.sam.n.large}
    \end{enumerate}
\end{assumption}

The next result, which we provide without proof, follows from Theorem~\ref{thm:x.y.pls.pop} and Theorem~2.14 by \cite{finocchio2025mod}.

\begin{theorem} \label{thm:x.y.pls.sam}
    Let $(\bx,y)\in\R^p\times\R$ satisfy Assumption~\ref{ass:x.y.model.free.4th}. Let $(\bX,\by)\in\R^{n\times p}\times\R^n$ be a dataset of i.i.d. realizations of $(\bx,y)$ and Assumption~\ref{ass:x.y.pls.sam} hold. Let $\bbeta_{s_{0}}\in\mB_{s_{0}}$ be the best parsimonious parameter induced by Equation~\eqref{eq:x.rel.y.B.s.star} and, for all $1\leq s\leq s_{0}$, let $\wh{\bbeta}_{\pls,s}$ be the sample PLS coefficients computed from $\wh{\pls}(\bx,y)$. Then, for any $\nu_{\wt{\mB}_{s},n} < \nu_{s,n} < \frac{1}{2}$, the size of the perturbation $\wh{\eps} = \wh{\eps}(\bx,y)$ in Equation~\eqref{eq:x.y.eps.hat} satisfies $\wh{\eps} \leq K_{\wt{\mB}_{s}}\ \nu_{s,n}^{-1}\ \delta_{\wt{\mB}_{s},n}$ with probability at least $1-2\nu_{s,n}$. On this event, one has
    \begin{align*}
        \frac{\|\wh{\bbeta}_{\pls,s} - \bbeta_{s_{0}}\|_{2}}{\|\bbeta_{s_{0}}\|_{2}} &\leq \sqrt{s_0-s} + \frac{5}{2} \wt{M}_{s} K_{\wt{\mB}_{s}} \sqrt{\frac{\rho_{\wt{\mB}_{s},n} \log r_{\bx}}{n\nu_{s,n}^2}}.
    \end{align*}
\end{theorem}

Under Assumption~\ref{ass:x.y.pls.sam} it is always possible to select $\nu_{s,n}\to0$ arbitrarily slow, when $n\to\infty$, so that $\nu_{s,n}^{-1}\ \delta_{\wt{\mB}_{s},n}\to0$ as well. The above result then shows that the sample PLS solution $\wh{\bbeta}_{\pls,s_{0}}$ using exactly $s_{0}$ degrees-of-freedom is an unbiased estimator fo the best parsimonious parameter $\bbeta_{s_{0}}$. Notice that the sample PLS algorithm only depends on the observed data $(\bX,\by)$ and has no knowledge of the factorization of the features into relevant and irrelevant parts. To the best of our knowledge, the above result is the first to provide a transparent characterization of the PLS method under random design in the model-free setting from Assumption~\ref{ass:x.y.model.free.4th}. For a discussion on the optimality on the above convergence rates, we refer to Remark~2.17 by \cite{finocchio2025mod}. Interestingly, we also generalize a result established by \cite{Chun2010} showing that PLS estimators are inconsistent when $p/n\to c>0$. We only require the uniform effective rank to be sufficiently small that $\rho_{n}/n\to0$ in our Assumption~\ref{ass:x.y.pls.sam}. All the results obtained in this section can be easily extended to the setting where the observed data is not i.i.d. as in the work by \cite{singer2016partial}. They assumed some underlying sample $(\bX,\by)\in\R^{n\times p}\times\R^n$ of i.i.d. observations but one only observes $\wt{\bX}=\bSigma_{n}^{1/2}\bX$ and $\wt{\by}=\bSigma_{n}^{1/2}\by$ for some unknown temporal covariance matrix $\bSigma_{n}\in\R_{\succ0}^{n\times n}$. Under the assumption that a consistent estimator $\wh{\bSigma}_{n}\in\R_{\succ0}^{n\times n}$ for the temporal covariance is available, then the convergence rates of the sample PLS solutions $\wh{\bbeta}_{\pls,s}$ computed from the normalized dataset $(\wh{\bSigma}_{n}^{-1/2}\wt{\bX},\wh{\bSigma}_{n}^{-1/2}\wt{\by})$ has an additional term that is proportional to $\|\wh{\bSigma}_{n}-\bSigma_{n}\|_{op}$. We refer to Section~4 by \cite{singer2016partial} for more details.

Under the extended latent factor model in Equation~\eqref{eq:x.y.elfm} and a signal-to-noise ratio $\lambda_{r_{\bq}}(\bSigma_{\bq})/\signot^2>4$, the next result follows from Theorem~\ref{thm:x.y.pls.pop.elfm} and Theorem~\ref{thm:x.y.pls.sam}. This result is provided without proof.

\begin{theorem}\label{thm:x.y.pls.sam.elfm}
    Under the assumptions of Theorem~\ref{thm:x.y.pls.pop.elfm} and Theorem~\ref{thm:x.y.pls.sam}, let $\balpha_{\ls}\in\mR(\bSigma_{\bq})$ be the minimum-$L^2$-norm solution of the latent population least-squares problem $\ls(\bq,y)$ from the extended latent factor model in Equation~\eqref{eq:x.y.elfm}. On the same event of probability at least $1-2\nu_{s,n}$, one has
    \begin{align*}
        \frac{\|\wh{\bbeta}_{\pls,s}-\balpha_{\ls}\|_{2}}{\|\balpha_{\ls}\|_{2}} &\leq \frac{7}{2}\sqrt{r_{\bq}-s} + 5 \{C_{r_{\bq}}+1\} \frac{\signot^2}{\lambda_{r_{\bq}}(\bSigma_{\bq})} + \frac{5}{2} \wt{M}_{s} K_{\wt{\mB}_{s}} \sqrt{\frac{\rho_{\wt{\mB}_{s},n} \log r_{\bx}}{n\nu_{s,n}^2}}.
    \end{align*}
\end{theorem}

Our novel proving strategy allows to comprehensively study the convergence rates of the sample PLS solutions $\wh{\bbeta}_{\pls,s}$ for all degrees-of-freedom $1\leq s\leq r_{\bq}$ with respect to the oracle latent solution $\balpha_{\ls}$. Although \cite{singer2016partial} studied convergence rates for PLS estimators, we improve on their results in different notable directions. First, they only considered a classical latent factor model as in Equation~\eqref{eq:x.y.lfm} without the possibility of irrelevant features $\bx_{y\bot}$. Second, they only considered the convergence of the PLS estimator to its population counterpart $\bbeta_{\pls,s}$ but not in terms of oracle latent solution $\balpha_{\ls}$ in Equation~\eqref{eq:x.y.elfm}. Third, they only provide bounds for the parameter $1\leq\widehat{s}\leq p$ resulting from the heuristic stopping rule of \cite{nemirovskii1986reg} instead of any number of degrees-of-freedom $1\leq s\leq r_{\bq}$. 

Under the same setting and a signal-to-noise ratio $\lambda_{r_{\bq}}(\bSigma_{\bq})/\signot^2>2\tau$ for some $\tau\geq8$, we prove the next result in Section~\ref{app:proof:pls} using Theorem~\ref{thm:x.y.pls.pop.elfm.stop} and Theorem~\ref{thm:x.y.pls.sam.elfm}.

\begin{theorem}\label{thm:x.y.pls.sam.elfm.stop}
   Under the assumptions of Theorem~\ref{thm:x.y.pls.pop.elfm.stop} and Theorem~\ref{thm:x.y.pls.sam.elfm}, let $\wh{\bU}_{s}$ be the orthogonal projection of $\R^p$ onto the sample Krylov space $\mK_{s}(\wh{\bSigma}_{\bx},\wh{\bsigma}_{\bx,y})$ for all $1\leq s\leq p$. Furthermore, assume that
   \begin{align*}
       K_{\wt{\mB}_{r_{\bq}+1}}\ \nu_{r_{\bq}+1,n}^{-1}\ \delta_{\wt{\mB}_{r_{\bq}+1},n}  < \frac{\lambda_{r_{\bq}}(\bSigma_{\bq})}{6\tau \{\|\bSigma_{\bx}\|_{op}\vee\|\bsigma_{\bx,y}\|_{2}\}\ }.
   \end{align*}
   Then, for any $\nu_{\wt{\mB}_{r_{\bq}+1},n} < \nu_{r_{\bq}+1,n} < \frac{1}{2}$, with probability at least $1-2\nu_{r_{\bq}+1,n}$ the sample early-stopped dimension
   \begin{align*}
       \wh{m}_{\bq} := \min\left\{1\leq s\leq p-1: \frac{\kappa_{2}(\wh{\bU}_{s+1}\wh{\bSigma}_{\bx}\wh{\bU}_{s+1})}{\kappa_{2}(\wh{\bU}_{s}\wh{\bSigma}_{\bx}\wh{\bU}_{s})} > \frac{2\tau-5}{4} \right\}
   \end{align*}
   satisfies $\wh{m}_{\bq}\leq r_{\bq}$.
\end{theorem}

The above result establishes a stopping rule for the sample PLS algorithm. The method relies on the sample condition numbers $\wh{\kappa}_{s}:=\kappa_{2}(\wh{\bU}_{s}\wh{\bSigma}_{\bx}\wh{\bU}_{s})$ computed for all degrees-of-freedom $1\leq s\leq r_{\bx}$. As mentioned earlier, the quantity $\tau\geq8$ is meant to be known, but one can replace $\wh{\kappa}_{s+1}/\wh{\kappa}_{s}>\{2\tau-5\}/4$ in the above display with the agnostic $\wh{\kappa}_{s+1}/\wh{\kappa}_{s}>11/4$ corresponding to $\tau=8$ instead. Stronger structural assumptions such as polynomial or exponential decay of the eigenvalues of the sample covariance $\wh{\bSigma}_{\bx}$ would allow for larger gaps in the ratios of conditioning numbers. The idea of monitoring the convergence of the PLS algorithm in terms of its empirical conditioning is not new. In their Section~4, \cite{blanchard2010} do this for the general class of kernel-PLS algorithms. Of course, the sample stopping rule in Theorem~\ref{thm:x.y.pls.sam.elfm.stop} is meant to be parsimonious rather than optimal. Lastly, we would like to point out that \cite{Kramer2011deg} have proposed a notion of degrees-of-freedom for PLS regression that is different from ours. For us, the degrees-of-freedom of the sample PLS solutions $\wh{\bbeta}_{\pls,s}$ are the dimensions of the corresponding sample Krylov spaces $\dim(\mK_{s}(\wh{\bSigma}_{\bx},\wh{\bsigma}_{\bx,y}))=\rank(\wh{\bU}_{s})$, so that $r_{\bq}$ is the oracle number of degrees-of-freedom. This essentially corresponds to Equation~(4) by \cite{Kramer2011deg} instead of their Definition~1 inspired by \cite{Efron2004est}. 

Consider a dataset $(\bX,\by)\in\R^{n\times p}\times\R^n$ consisting of $n\geq1$ i.i.d. realizations $(\bx_{i},y_{i})$ of the same population pair $(\bx,y)\in\R^p\times\R$ under Assumption~\ref{ass:x.y.model.free.4th}. Let $y_{n+1}\in\R$ be a new unobserved response value and $\bx_{n+1}\in\R^p$ a new observed feature vector following the same population distribution. Let $\wh{\bbeta}_{\pls,s}$ be the sample PLS solution with $1\leq s\leq p$ computed from the data $(\bX,\by)$ and independent of the new pair $(\bx_{n+1},y_{n+1})$. With $\bbeta_{s_{0}}$ the best parsimonious parameter, we define the risk and excess risk 
\begin{align*}
    R_{\bx,y}(\wh{\bbeta}_{\pls,s}) := \E_{(\bx,y)}\big(\{y-\bx^t\wh{\bbeta}_{\pls,s}\}^2|\bX,\by\big), \quad
    R_{\bx,y}^{(ex)}(\wh{\bbeta}_{\pls,s}) := R_{\bx,y}(\wh{\bbeta}_{\pls,s}) - R_{\bx,y}(\bbeta_{s_{0}}),
\end{align*}
where the expectation is taken with respect to the population pair $(\bx,y)$ and conditionally on the data $(\bX,\by)$. The next result follows from Theorem~\ref{thm:x.y.pls.sam} and Theorem~2.18 by \cite{finocchio2025mod} and is provided without proof.

\begin{theorem} \label{thm:x.y.pls.pred}
    Under the assumptions of Theorem~\ref{thm:x.y.pls.sam}, on the same event with probability at least $1-2\nu_{s,n}$, the excess-risk is
    \begin{align*}
        R_{\bx,y}^{(ex)}(\wh{\bbeta}_{\pls,s}) &= \big\|\wh{\bbeta}_{\pls,s}-\bbeta_{s_{0}}\big\|_{\bSigma_{\bx}}^2 - 2 \big\langle \wh{\bbeta}_{\pls,s}-\bbeta_{s_{0}},\ \bbeta_{\ls}-\bbeta_{s_{0}} \big\rangle_{\bSigma_{\bx}}.
    \end{align*}
\end{theorem}

The above result deals with the problem of best parsimonious linear prediction $\bx_{n+1}^t\bbeta_{s_{0}}$ of the new unobserved response $y_{n+1}$ regardless of the true dependence between the features and the response. It shows that the excess risk of the sample PLS solution $\wh{\bbeta}_{\pls,s}$ is proportional to $\|\wh{\bbeta}_{\pls,s}-\bbeta_{s_{0}}\|_{\bSigma_{\bx}}^2$ which is the square of the $\bSigma_{\bx}$-weighted convergence rate we found in Theorem~\ref{thm:x.y.pls.sam}. In particular, the sample PLS solution $\wh{\bbeta}_{\pls,s_{0}}$ using exactly $s_{0}$ degrees-of-freedom is unbiased and so $R_{\bx,y}^{(ex)}(\wh{\bbeta}_{\pls,s_{0}})\to0$ in probability when $n\to\infty$. Although the sample PLS predictor $\bx_{n+1}^t\wh{\bbeta}_{\pls,s_{0}}$ might far from any possibly overparametrized predictor $\wh{f}(\bx_{n+1})$ achieving optimal prediction risk for $y_{n+1}$, it is otherwise parsimonious and interpretable. 

It is immediate to formulate the corresponding version of the above theorem in the setting of extended latent factor model in Equation~\eqref{eq:x.y.elfm} under the assumptions of Theorem~\ref{thm:x.y.pls.sam.elfm}. In this setting, the oracle latent predictor for the new unobserved response $y_{n+1}$ is $\bx_{n+1}^t\balpha_{\ls}$ with $\balpha_{\ls}$ the oracle latent solution. Measuring the excess risk as $R_{\bx,y}^{(ex)}(\wh{\bbeta}_{\pls,s}) := R_{\bx,y}(\wh{\bbeta}_{\pls,s}) - R_{\bx,y}(\balpha_{\ls})$, one thus finds the finite-sample excess risk of the sample PLS solutions $\wh{\bbeta}_{\pls,s}$ to be proportional to $\|\wh{\bbeta}_{\pls,s}-\balpha_{\ls}\|_{\bSigma_{\bx}}^2$ which are the squares of the $\bSigma_{\bx}$-weighted convergence rates we found in Theorem~\ref{thm:x.y.pls.sam.elfm}. This improves upon previous works by \cite{bing2021pre}, who derived finite-sample prediction risk for projection methods only for classical latent models, and \cite{Cook2019par} who established the prediction risk of sample PLS only asymptotically.

\section{Numerical Studies} \label{sec:num}
We confirm our findings with empirical studies on both simulated and real datasets.

\subsection{Simulated Data}\label{sec:num-simul}
We simulate our dataset $(\bX,\by)$ according to the following scheme:
\begin{enumerate}[label=(\roman*)]
    \item we choose $n=2000$ and $p=200$; the number of relevant features is always $r_{y}=100$ and the true number of factors is always $r_{\bq}=25$;
    \item we draw the latent dataset $\bQ = (\bq_{1},\ldots,\bq_{n})^t \in\R^{n\times r_{\bq}}$ as
    \begin{align*}
        \bq_{i} \overset{ind}{\sim} \mN\Big(\bzero_{r_{\bq}}, \diag(\bsigma_{\bq}^2)\Big) \in \R^{r_{\bq}},\quad 5 = (\bsigma_{\bq})_{1} > \ldots > (\bsigma_{\bq})_{r_{\bq}} = 1 ;
    \end{align*}
    \item we draw the relevant dataset $\bQ_{y} = (\bq_{y,1},\ldots,\bq_{y,n})^t \in\R^{n\times r_{y}}$ as
    \begin{align*}
        \bq_{y,i} | \bq_{i} \overset{ind}{\sim} \mN\left( 
        \Big(\begin{matrix}
        \bq_{i} \\
        \bzero_{r_{y}-r_{\bq}}
    \end{matrix}\Big)
    , \diag(\bsigma_{0}^2) \right) \in \R^{r_{y}},\quad \signot = (\bsigma_{0})_{1} > \ldots > (\bsigma_{0})_{r_{y}} = 10^{-3}
    \end{align*}
    with $\signot=1$ for large noise level and induced signal-noise-ratio $(\bsigma_{\bq})_{r_{\bq}}^2/\signot^2=1$;
    \item we draw the irrelevant dataset $\bQ_{y^\bot} = (\bq_{y^\bot,1},\ldots,\bq_{y^\bot,n})^t \in\R^{n\times (p-r_{y})}$ as
    \begin{align*}
        \bq_{y^\bot,i} \overset{ind}{\sim} \mN\Big(\bzero_{p-r_{y}}, \diag(\bsigma_{y^\bot}^2) \Big) \in \R^{p-r_{y}},\quad (\bsigma_{y^\bot})_{r_{y^\bot}+1} = \ldots = (\bsigma_{y^\bot})_{p-r_{y}} = 0
    \end{align*}
    with largest eigenvalue $(\bsigma_{y^\bot})_{1}=2.5$ for strong irrelevant features and $(\bsigma_{y^\bot})_{1}=0.1$ for weak irrelevant features;
    \item with deterministic orthonormal matrix $\bU\in\R^{p\times p}$, we assemble the observed dataset $\bX = (\bx_{1},\ldots,\bx_{n})^t \in\R^{n\times p}$
    \begin{align*}
        \bX &= \left(\bQ_{y}\ |\ \bQ_{y^\bot}\right) \bU^t \in \R^{n\times p} ;
    \end{align*}
    \item with deterministic $\balpha_{0} = (1,2,\ldots,r_{\bq})^t\in\R^{r_{\bq}}$ we draw the observed response vector $\by = (y_{1},\ldots,y_{n})^t\in\R^n$ as
    \begin{align*}
        \quad y_{i} | \bq_{i} \overset{ind}{\sim} \mN\left(\bq_{i}^t\balpha_{0}, 1\right) \in \R ;
    \end{align*}
    \item with $\bP = \bU \bI_{r_{y},p} \bI_{r_{\bq},r_{y}} \in\R^{p\times r_{\bq}}$ we obtain the oracle coefficients $\bbeta_{0} = \bP\balpha_{0}\in\R^p$.
\end{enumerate}
\clearpage

\begin{figure}[ht]
\centering
\includegraphics[width=1\textwidth]{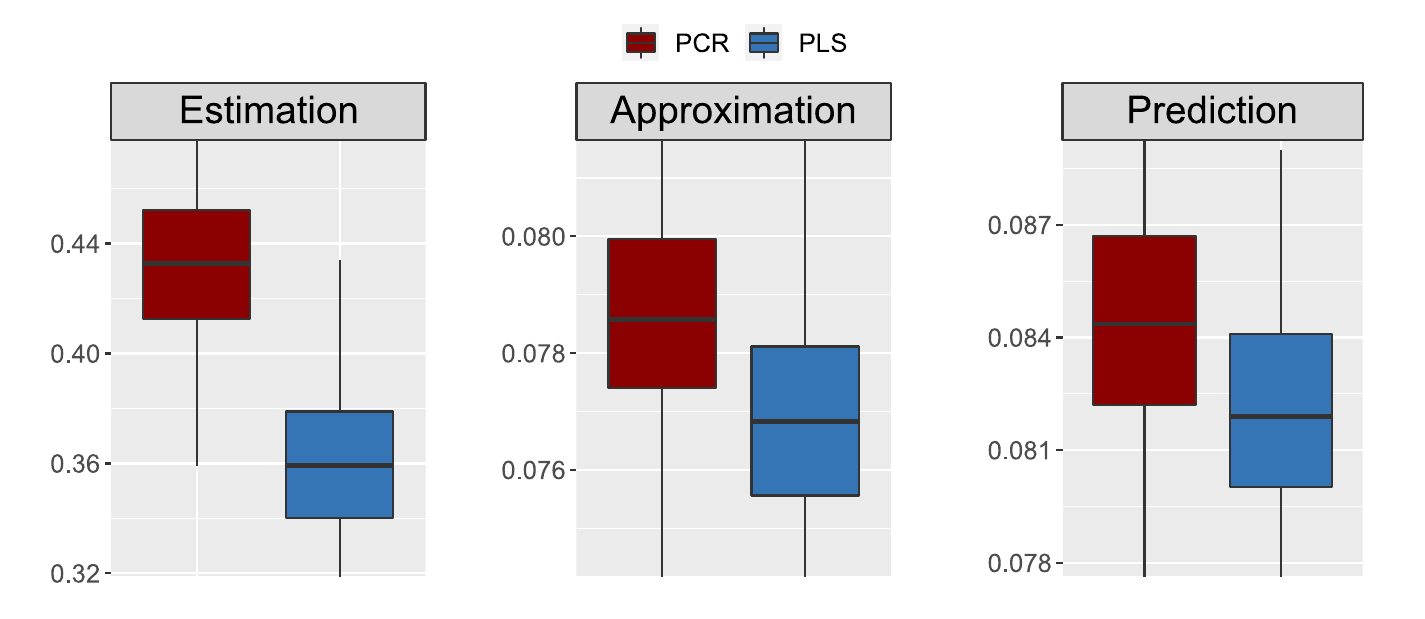}
\includegraphics[width=1\textwidth]{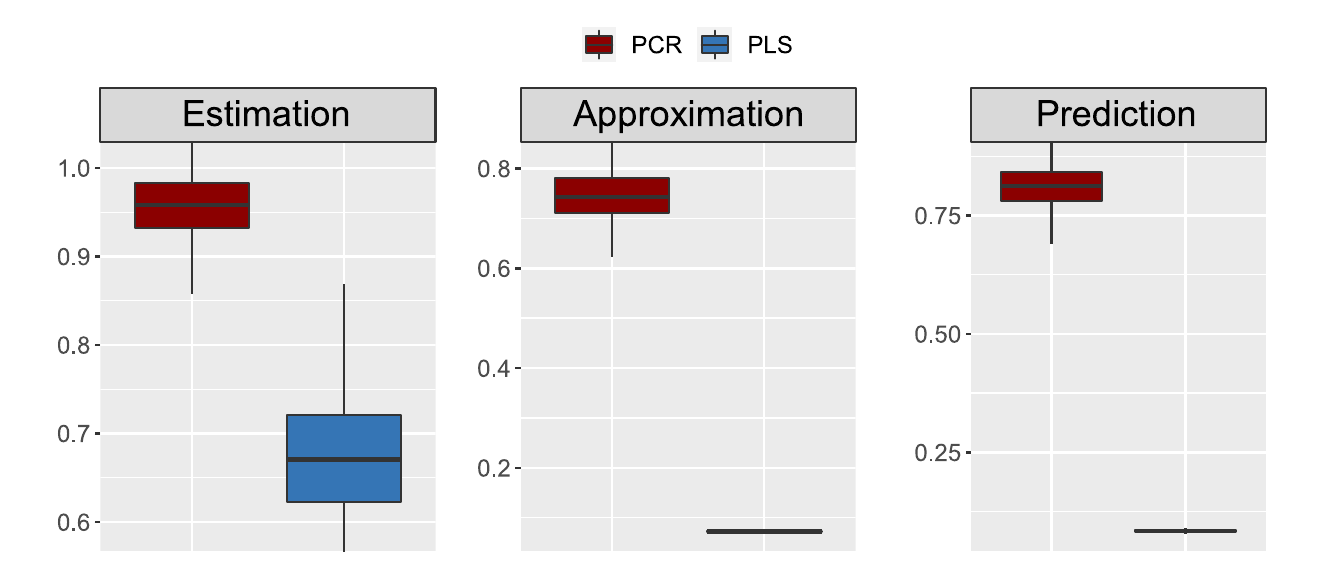}
\vspace{-0.5cm}
\caption{Performance of PCR (red) and PLS (blue). The estimation error $\|\wh{\bbeta}_{\wh{s}}-\bbeta_{0}\|_{2}/\|\bbeta_{0}\|_{2}$, the approximation error $\|\wh{\by}_{\wh{s}}-\by_{train}\|_{2}/\|\by_{train}\|_{2}$, the prediction error $\|\wh{\by}_{\wh{s}}-\by_{test}\|_{2}/\|\by_{test}\|_{2}$. TOP: Weak irrelevant features $\sigma_{y^\bot}=0.1$. BOTTOM: Strong irrelevant features $\sigma_{y^\bot}=2.5$.}
\label{fig:n>p}
\end{figure}

Over $K=500$ repetitions, we split the dataset into two random training and test sets both of size $n/2$. At each repetition, we compute the estimators $\wh{\bbeta}_{\wh{s}}$ for PCR and PLS using $1\leq \wh{s}\leq m$ degrees-of-freedom where $\wh{s}$ is the largest integer $1\leq s\leq m$ for which the condition number $\kappa_{2}(\bX\wh{\bU}_{s})$ is smaller than a chosen threshold $\kappa_{0}>1$ inspired by \cite{kim2019mult}, here we choose $\log_{10}(\kappa_{0})=2.25$. For each method, we compute the relative estimation error $\|\wh{\bbeta}_{\wh{s}}-\bbeta_{0}\|_{2}/\|\bbeta_{0}\|_{2}$, the relative approximation error $\|\wh{\by}_{\wh{s}}-\by_{train}\|_{2}/\|\by_{train}\|_{2}$ on the training data and the relative prediction error $\|\wh{\by}_{\wh{s}}-\by_{test}\|_{2}/\|\by_{test}\|_{2}$ on the test data. We compare in Figure~\ref{fig:n>p} the performance of PCR and PLS in presence of weak/strong irrelevant features with $\sigma_{y^\bot}\in\{0.1,2.5\}$. We can see that PLS is either much better than or comparable with PCR. In particular, the PLS estimator often requires much fewer degrees-of-freedom (not show in the figure).

\subsection{Real Data}\label{sec:num-real}
We revisit the findings of \cite{krivobokova2012} who considered data generated by the MD simulations for the yeast aquaporin (Aqy1), the gated water channel of the yeast \textit{Pichia pastoris}. The data are given as Euclidean coordinates of $N=783$ atoms, thus $p=783\times 3 = 2.349$ features, of Aqy1 observed in a 100 nanosecond time frame, split into $n=20.000$ equidistant observations. Additionally, the diameter of the channel $y_{i}$ is measured by the distance between two centers of mass of certain residues of the protein $\bx_{i}$. We take the first half of the data as training set $(\bX_{train},\by_{train})$ and the remaining half as test set $(\bX_{test},\by_{test})$, each consisting of $n/2=10.000$ observations. Since the data has been produced via molecular dynamics simulations, the observations $(\bx_{i},y_{i})$, $i=1,\ldots,n$, are not independent nor identically distributed and \cite{singer2016partial} show that PLS estimates might be inconsistent if one does not account for this dependence. We thus normalize the training data with an estimated temporal covariance matrix $\wh{\bSigma}\in\R^{n\times n}$ computed according to \cite{Klockmann2024}. That is, we use $(\wt{\bX}_{train},\wt{\by}_{train})$ with $\wt{\bX}_{train}=\wh{\bSigma}^{-1/2}\bX_{train}$ and $\wt{\by}_{train}=\wh{\bSigma}^{-1/2}\by_{train}$. The results are shown in Figure~\ref{fig:aqy1-scale} and discussed below and PLS is confirmed to be the superior method.

\begin{figure}[ht]
    \centering
    \includegraphics[width=\textwidth]{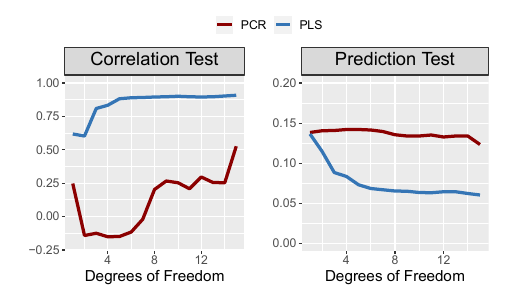}
    \vspace{-1cm}
    \caption{Comparison between PCR and PLS on the Aqy1 dataset studied by \cite{krivobokova2012} rescaled according to \cite{Klockmann2024}. Correlation $\cor(\wh{\by}_{\wh{s}},\by_{test})$ between estimated response and true response on test data and relative $L^2$-prediction error $\|\wh{\by}_{\wh{s}}-\by_{test}\|_{2}/\|\by_{test}\|_{2}$ between estimated response and true response on test data.}
    \label{fig:aqy1-scale}
\end{figure}

From the training set, we compute PCR/PLS estimators $\wh{\bbeta}_{\wh{s}}$ corresponding to $\wh{s}=1,\ldots,15$ latent components. We also compute the estimated condition number $\wh{\kappa}_{\wh{s}}$ of the reduced sample covariance matrix on the training set. To evaluate the models, we compute the correlation between the estimated responses $\wh{\by}_{\wh{s}} = \bX_{test} \wh{\bbeta}_{\wh{s}}$ and the observed response $\by_{test}$ on the test set, together with the relative $L^2$-prediction error $\|\wh{\by}_{\wh{s}}-\by_{test}\|_{2}/\|\by_{test}\|_{2}$. Figure~\ref{fig:aqy1-scale} shows that PCR is much worse than PLS in terms of correlation and prediction on the test data. Even with $\wh{s}=15$, the correlation induced by PCR barely reaches $50\%$, whereas that of PLS is essentially $90\%$. We thus confirm the empirically findings by \cite{krivobokova2012} on their Aqy1 dataset which showed that PCR might be misleading when large directions of variation are uncorrelated with the response.

\section{Discussion} \label{sec:outro}
We provided a novel framework that is compatible with high-dimensional datasets arising from modern applications and we developed the tools to study and compare linear dimensionality reduction algorithms such as PLS and PCR. Our extended latent factor model naturally generalizes to the case where the features are $\bx=\bq+\signot\be+\bx_{y^\bot}$ and the response satisfies $\E(y|\bq)=g(\bq^\top\balpha)$ for some known link function $g$. In a future work, we will address this problem and study the statistical properties of an appropriate generalized-PLS algorithm. A comprehensive theory on the subject is unavailable despite the many heuristic attempts to extend the PLS algorithm to ill-posed generalized linear models due to \cite{marx1996irpls}, \cite{Fort2004}, \cite{ding2005class}, \cite{bastien2005plsglr} and \cite{stocchero2021plsc}. The main challenge is to tackle the additional iterative scheme that is typical of methods computing the sample maximum likelihood such as iteratively-reweighted-least-squares discussed by \cite{mccullagh1989glm}.

\clearpage

\begin{appendix}

\section{Auxiliary Results}\label{app:aux}
Here we gather all the relevant auxiliary results and provide proofs when necessary.

\subsection{Random Vectors}\label{app:aux:rand}

\begin{lemma}[Lemma~B.1 by \cite{finocchio2025mod}]\label{lem:x.y.cov}
    Let $\bx\in\R^p$ be a possibly degenerate random vector and $y\in\R$ a random variable, both centered and with finite second moments. Then, $\bx\in\mR(\bSigma_{\bx})$ almost surely and $\bsigma_{\bx,y}\in\mR(\bSigma_{\bx})$.
\end{lemma}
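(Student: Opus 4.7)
The plan rests on the spectral decomposition of the symmetric positive semi-definite matrix $\bSigma_{\bx}$, which yields the orthogonal splitting $\R^p = \mR(\bSigma_{\bx}) \oplus \ker(\bSigma_{\bx})$ together with the identification $\ker(\bSigma_{\bx}) = \mR(\bSigma_{\bx})^\bot$. The key lemma I would establish first is that any vector in the kernel annihilates $\bx$ almost surely: for $\bv \in \ker(\bSigma_{\bx})$, one has $\E[(\bv^\top \bx)^2] = \bv^\top \bSigma_{\bx} \bv = 0$, so $\bv^\top \bx = 0$ almost surely.

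For the first claim, I would fix any orthonormal basis $\{\bv_{1},\ldots,\bv_{k}\}$ of $\ker(\bSigma_{\bx})$, where $k = p - \rank(\bSigma_{\bx}) \leq p$. By the preliminary observation, for each $i$ there exists a null event $N_{i}$ outside of which $\bv_{i}^\top \bx = 0$. On the complement of the finite union $N := \bigcup_{i=1}^k N_{i}$, which still has probability zero, the vector $\bx$ is orthogonal to every basis vector of $\ker(\bSigma_{\bx})$, hence $\bx \in \ker(\bSigma_{\bx})^\bot = \mR(\bSigma_{\bx})$. This gives $\P(\bx \in \mR(\bSigma_{\bx})) = 1$.

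For the second claim, I would again take an arbitrary $\bv \in \ker(\bSigma_{\bx})$ and compute
\begin{align*}
    \bv^\top \bSigma_{\bx,y} = \E[\bv^\top \bx \cdot y].
\end{align*}
Cauchy-Schwarz gives $|\E[\bv^\top \bx \cdot y]| \leq \E[(\bv^\top \bx)^2]^{1/2} \E[y^2]^{1/2} = 0$, using the finite second moment of $y$ and the preliminary observation. Since this holds for every $\bv \in \ker(\bSigma_{\bx})$, we conclude $\bSigma_{\bx,y} \in \ker(\bSigma_{\bx})^\bot = \mR(\bSigma_{\bx})$.

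There is no serious obstacle here; this is a standard linear-algebraic argument built on the variance identity $\bv^\top \bSigma_{\bx} \bv = \Var(\bv^\top \bx)$. The only subtlety worth double-checking is that the almost-sure statement in the first claim requires a basis of $\ker(\bSigma_{\bx})$ (so that the null set is a finite union and therefore still null), rather than quantifying over all vectors in the kernel simultaneously.
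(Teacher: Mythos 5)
Your proof is correct and follows essentially the same route as the paper: both arguments reduce to the observation that the component of $\bx$ orthogonal to $\mR(\bSigma_{\bx})$ has zero variance and hence vanishes almost surely. The only cosmetic differences are that the paper works with the projected residual vector $(\bI_{p}-\bP\bP^\top)\bx$ and its (zero) covariance matrix rather than with individual kernel directions, and for the second claim it substitutes $\bx=\bP\bP^\top\bx$ a.s.\ directly into $\E[\bx y]$ where you invoke Cauchy--Schwarz.
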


\begin{lemma}[Lemma~B.2 by \cite{finocchio2025mod}]\label{lem:x.y.ls}
    Let $(\bx,y)\in\R^p\times\R$ be a centered random pair for which the squared-loss $\ell_{\bx,y}(\bbeta):=\E(y-\bx^t\bbeta)^2$ is well-defined for all $\bbeta\in\R^p$. The set of least-squares solutions $\ls(\bx,y,\R^p) := \argmin_{\bbeta\in\R^p} \ell_{\bx,y}(\bbeta)$ is $\{\bbeta\in\R^p:\bSigma_{\bx}\bbeta=\bsigma_{\bx,y}\}$ and the minimum-$L^2$-norm solution is $\bbeta_{\ls} := \bSigma_{\bx}^\dagger\bsigma_{\bx,y}$.
\end{lemma}

\begin{lemma}[Lemma~2.2 by \cite{finocchio2025mod}]\label{lem:ls.rel.pop}
    Let $(\bx,y)\in\R^p\times\R$ satisfy Assumption~\ref{ass:x.y.model.free.2nd}. The relevant subspace $\mB_{y}$ in Equation~\eqref{eq:rel.sub} is unique. Furthermore, with $\bx_{y}$ the relevant features in Equation~\eqref{eq:x.rel.x.irr}, it holds $\ls(\bx,y) = \ls(\bx_{y},y)$ for the population least-squares problem in Equation~\eqref{eq:x.y.ls.pop}. 
\end{lemma}

\subsection{Numerical Perturbation Theory}\label{app:A:pert}
In this section we provide classical and novel results which are relevant to the theory of deterministic perturbations of least-squares problems.

\begin{lemma}[Theorem~3.3.16 by~\cite{Horn1991}]\label{lem:weyl}
    Let $\bA,\bB\in\R_{\succeq0}^{p\times p}$ be any two matrices. Then,
    \begin{align*}
        \lambda_{i+j-1}(\bA+\bB) \leq \lambda_{i}(\bA) + \lambda_{j}(\bB) \leq \lambda_{i+j-p}(\bA+\bB),\quad 1\leq i,j \leq p,
    \end{align*}
    and also
    \begin{align*}
        |\lambda_{i}(\bA+\bB)-\lambda_{i}(\bA)| \leq \lambda_{1}(\bB),\quad 1\leq i\leq p.
    \end{align*}
\end{lemma}

\begin{theorem}[Theorem~1.1 by~\cite{wei1989}]\label{thm:ls.pert}
    Let $\bzeta_{\ls} := \ls(\bA,\bb)$ be the minimum-$L^2$-norm solution of a least-squares problem with $\bA\in\R^{p\times p}$ some symmetric and positive semi-definite matrix and $\bb\in\mR(\bA)$ some vector. Let $\wt{\bzeta}_{\ls} := \ls(\wt{\bA},\wt{\bb})$ be the minimum-$L^2$-norm solution of a perturbed least-squares problem with $\wt{\bA}=\bA+\wt{\Delta\bA}\in\R^{p\times p}$ some symmetric and positive semi-definite matrix and $\wt{\bb}=\bb+\wt{\Delta\bb}\in\mR(\wt{\bA})$ some vector. Assume that $\rank(\wt{\bA})=\rank(\bA)$ and 
    \begin{align*}
        \frac{\|\wt{\Delta\bb}\|_{2}}{\|\bb\|_{2}} \leq \eps,\quad \frac{\|\wt{\Delta\bA}\|_{op}}{\|\bA\|_{op}} \leq \eps,\quad 0 \leq \eps \leq \frac{1}{2\cdot \kappa_{2}(\bA)}.
    \end{align*}
    Then,
    \begin{align*}
        \frac{\|\wt{\bzeta}_{\ls}-\bzeta_{\ls}\|_{2}}{\|\bzeta_{\ls}\|_{2}} &\leq 5 \cdot \kappa_{2}(\bA) \cdot \eps.
    \end{align*}
\end{theorem}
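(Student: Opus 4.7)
The plan is to combine a Weyl-type bound on the smallest singular value of $\widetilde{\bA}$ with Wedin's perturbation identity for the Moore-Penrose pseudoinverse, exploiting both the equal-rank hypothesis and the symmetry of $\bA,\widetilde{\bA}$. The whole strategy rests on the observation that $\widetilde{\bzeta}_{ls}-\bzeta_{ls}=\widetilde{\bA}^+\widetilde{\bb}-\bA^+\bb = \widetilde{\bA}^+\Delta\widetilde{\bb}+(\widetilde{\bA}^+-\bA^+)\bb$, so it suffices to control $\|\widetilde{\bA}^+\|_{op}$ and to rewrite the pseudoinverse difference in a form amenable to the hypothesis $\bb\in\mR(\bA)$.

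First I would bootstrap a bound on $\|\widetilde{\bA}^+\|_{op}$. Setting $r:=\rank(\bA)=\rank(\widetilde{\bA})$, the nonzero eigenvalues $\sigma_{1}(\bA)\geq\cdots\geq\sigma_{r}(\bA)>0$ satisfy $\sigma_{1}(\bA)=\|\bA\|_{op}$ and $\sigma_{r}(\bA)=\|\bA^+\|_{op}^{-1}$. Weyl's inequality yields $\sigma_{r}(\widetilde{\bA})\geq \sigma_{r}(\bA)-\|\Delta\widetilde{\bA}\|_{op}\geq \sigma_{r}(\bA)-\eps\|\bA\|_{op}$; the hypothesis $\eps\leq 1/(2\kappa_{2}(\bA))$ is precisely what is needed to force $\eps\|\bA\|_{op}\leq \sigma_{r}(\bA)/2$, giving $\sigma_{r}(\widetilde{\bA})\geq \sigma_{r}(\bA)/2$ and therefore $\|\widetilde{\bA}^+\|_{op}\leq 2\|\bA^+\|_{op}$.

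Next I would invoke Wedin's identity for pseudoinverses, which for symmetric matrices (where $\bA^{+\top}=\bA^+$) specializes to
\begin{equation*}
\widetilde{\bA}^+-\bA^+ = -\widetilde{\bA}^+\Delta\widetilde{\bA}\,\bA^+ + (\widetilde{\bA}^+)^{2}\Delta\widetilde{\bA}(\bI-\bA\bA^+) + (\bI-\widetilde{\bA}^+\widetilde{\bA})\Delta\widetilde{\bA}(\bA^+)^{2}.
\end{equation*}
Applying this to $\bb$, the middle term vanishes because $\bb\in\mR(\bA)$ forces $(\bI-\bA\bA^+)\bb=\bzero$. Using $\bA^+\bb=\bzeta_{ls}$, one obtains the clean decomposition
\begin{equation*}
\widetilde{\bzeta}_{ls}-\bzeta_{ls} = \widetilde{\bA}^+\Delta\widetilde{\bb} \;-\; \widetilde{\bA}^+\Delta\widetilde{\bA}\,\bzeta_{ls} \;+\; (\bI-\widetilde{\bA}^+\widetilde{\bA})\Delta\widetilde{\bA}\,\bA^+\bzeta_{ls}.
\end{equation*}

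Finally I would bound the three terms separately. Using $\|\bb\|_{2}=\|\bA\bzeta_{ls}\|_{2}\leq\|\bA\|_{op}\|\bzeta_{ls}\|_{2}$ together with the Step-1 estimate, the first term is at most $\|\widetilde{\bA}^+\|_{op}\eps\|\bb\|_{2}\leq 2\eps\kappa_{2}(\bA)\|\bzeta_{ls}\|_{2}$; the second is at most $\|\widetilde{\bA}^+\|_{op}\eps\|\bA\|_{op}\|\bzeta_{ls}\|_{2}\leq 2\eps\kappa_{2}(\bA)\|\bzeta_{ls}\|_{2}$; and the third is controlled by noting that $(\bI-\widetilde{\bA}^+\widetilde{\bA})$ is the orthogonal projection onto $\ker(\widetilde{\bA})$, hence has operator norm at most $1$, giving $\eps\|\bA\|_{op}\cdot\|\bA^+\|_{op}\|\bzeta_{ls}\|_{2}=\eps\kappa_{2}(\bA)\|\bzeta_{ls}\|_{2}$. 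Summing yields the claimed factor $5\kappa_{2}(\bA)\eps$. The main subtlety, which is where the equal-rank hypothesis does essential work, is guaranteeing that $\|\widetilde{\bA}^+\|_{op}$ does not blow up under perturbation; without this hypothesis a rank drop in $\widetilde{\bA}$ could send $\sigma_{r}(\widetilde{\bA})$ to zero and Wedin's identity would lose its utility.
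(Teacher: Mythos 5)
The paper does not prove this statement at all: it imports it verbatim as Theorem~1.1 of \cite{wei1989}, so there is no internal proof to compare against. Your argument is a correct, self-contained derivation of the stated bound in the symmetric positive semi-definite case. The three ingredients all check out: (i) Weyl's inequality plus the equal-rank hypothesis gives $\sigma_{r}(\widetilde{\bA})\geq\sigma_{r}(\bA)-\eps\|\bA\|_{op}\geq\sigma_{r}(\bA)/2$, hence $\|\widetilde{\bA}^{+}\|_{op}\leq 2\|\bA^{+}\|_{op}$, and you correctly identify that without equal ranks $\|\widetilde{\bA}^{+}\|_{op}$ is not $1/\sigma_{r}(\widetilde{\bA})$ and the bound collapses; (ii) the Wedin decomposition you quote is the correct specialization to real symmetric $\bA,\widetilde{\bA}$ (so that $\Delta\widetilde{\bA}$ is symmetric and all adjoints disappear), and the middle term is indeed annihilated by $\bb\in\mR(\bA)$ via $(\bI-\bA\bA^{+})\bb=\bzero$; (iii) the term-by-term bounds $2+2+1$ use $\bb=\bA\bzeta_{ls}$ (valid because $\bA\bA^{+}\bb=\bb$) and the fact that $\bI-\widetilde{\bA}^{+}\widetilde{\bA}$ is an orthogonal projection, yielding exactly the constant $5$. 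If anything, your proof is slightly sharper in spirit than the general result of \cite{wei1989}, which handles arbitrary rectangular perturbations; the symmetry assumption lets you avoid the extra adjoint bookkeeping and arrive at the clean constant directly. One cosmetic remark: the statement is a relative bound, so one implicitly assumes $\bb\neq\bzero$ and $\bzeta_{ls}\neq\bzero$; this is harmless but worth a word.
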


\begin{lemma}\label{lem:pls.resid}
    Let $\bA\in\R^{p\times p}$ be any symmetric positive-semidefinite matrix, $\bb\in\mR(\bA)$ any vector. For all $1\leq s' < s\leq \deg(p_{\bA})$, let $\bzeta_{pls,s}=\pls(\bA,\bb,s)$ and $\bzeta_{pls,s'}=\pls(\bA,\bb,s')$. Then, the residual $\br_{s} = \bzeta_{pls,s} - \bzeta_{pls,s'}$ is orthogonal to the Krylov space $\mK_{s'}(\bA,\bb)$.
\end{lemma}
\begin{proof}[\textbf{Proof of Lemma~\ref{lem:pls.resid}}]
    This is one of the defining properties of the PLS algorithm discussed by \cite{Helland1988pls}. It implies that, with $\bK_{s'}\bK_{s'}^\top$ the orthogonal projection onto the $s'$-dimensional Krylov space $\mK_{s'}(\bA,\bb)$, one has $\bK_{s'}\bK_{s'}^\top \bzeta_{pls,s} = \bzeta_{pls,s'}$ or, equivalently, $\bK_{s'}\bK_{s'}^\top \br_{s} = \bzero_{p}$.
\end{proof}

\section{Proofs}\label{app:proof}
Here we provide all the proofs for the results in the main sections.

\subsection{Proofs for Section~\ref{sec:elf}}\label{app:proof:lm}

\begin{lemma}\label{lem:x.y.elfm}
    Let $(\bx,y)\in\R^p\times\R$ satisfy Assumption~\ref{ass:x.y.model.free.2nd}. Under Equation~\eqref{eq:x.y.elfm} let $\balpha_{\ls}$ be the minimum-$L^2$-norm solution of the population least-squares problem $\ls(\bq,y)$ and $\bU_{\bq}\bbeta_{\ls}$ the orthogonal projection onto $\mR(\bSigma_{\bq})$ of the the population least-squares solution $\bbeta_{\ls}$ from Equation~\eqref{eq:x.y.ls.pop}. Then, $\bU_{\bq}\bbeta_{\ls}$ is also the minimum-$L^2$-norm solution of the population least-squares problem $\ls(\bx_{\bq},y)$. Furthermore, if $\signot^2<\lambda_{r_{\bq}}(\bSigma_{\bq})/2$, then
    \begin{align*}
        \frac{\|\bU_{\bq}\bbeta_{\ls}-\balpha_{\ls}\|_{2}}{\|\balpha_{\ls}\|_{2}} \leq 5\ \frac{\signot^2}{\lambda_{r_{\bq}}(\bSigma_{\bq})}.
    \end{align*}
\end{lemma}
\begin{proof}[\textbf{Proof of Lemma~\ref{lem:x.y.elfm}}]
    To prove the first statement recall the following. From the definition of relevant subspace $\mB_{y}$ in Equation~\eqref{eq:rel.sub} and the corresponding factorization in Equation~\eqref{eq:x.rel.x.irr}, we find that $\bU_{y}=\bSigma_{\bx_{y}}^\dagger\bSigma_{\bx_{y}}$ is the orthogonal projection of $\R^p$ onto $\mR(\bSigma_{\bx_{y}}^\dagger)=\mR(\bSigma_{\bx_{y}})$. Since $\mR(\bU_{\bq})=\mR(\bSigma_{\bq})\subseteq\mB_{y}=\mR(\bSigma_{\bx_{y}})$ is the range of the latent features in Equation~\eqref{eq:x.y.elfm} we also find $\bU_{\bq}\bSigma_{\bx_{y}}^\dagger\bSigma_{\bx_{y}} = \bSigma_{\bx_{y}}^\dagger\bSigma_{\bx_{y}}\bU_{\bq} = \bU_{\bq}$. This implies
    \begin{align*}
        \bSigma_{\bx_{y}}\bU_{\bq} (\bU_{\bq}\bSigma_{\bx_{y}}^\dagger) \bSigma_{\bx_{y}}\bU_{\bq} = \bSigma_{\bx_{y}}\bU_{\bq} \bU_{\bq}\bSigma_{\bx_{y}}^\dagger \bSigma_{\bx_{y}}\bU_{\bq} = \bSigma_{\bx_{y}}\bU_{\bq},
    \end{align*}
    so that $(\bSigma_{\bx_{y}}\bU_{\bq})^\dagger=\bU_{\bq}\bSigma_{\bx_{y}}^\dagger$. Also, with the square-root matrices $\bSigma_{\bx_{y}}^{1/2}$ and $\bSigma_{\bx_{y}}^{\dagger/2}$ of $\bSigma_{\bx_{y}}$ and $\bSigma_{\bx_{y}}^\dagger$ from Theorem~7.2.6 by \cite{Horn1985} one finds as well $\mR(\bSigma_{\bx_{y}}^{1/2})=\mR(\bSigma_{\bx_{y}})$ and $\mR(\bSigma_{\bx_{y}}^{\dagger/2})=\mR(\bSigma_{\bx_{y}}^\dagger)$. With all the above, we can finally check that the projection of $\bbeta_{\ls}$ onto $\mR(\bSigma_{\bq})$ satisfies
    \begin{align*}
        \bU_{\bq}\bSigma_{\bx_{y}}^\dagger\bsigma_{\bx_{y},y} = \bU_{\bq}\bSigma_{\bx_{y}}^{\frac{\dagger}{2}}\bSigma_{\bx_{y}}^{\frac{\dagger}{2}}\bsigma_{\bx_{y},y} = (\bSigma_{\bx_{y}}^{\frac{1}{2}}\bU_{\bq})^\dagger\bSigma_{\bx_{y}}^{\frac{\dagger}{2}}\bsigma_{\bx_{y},y} = (\bU_{\bq}\bSigma_{\bx_{y}}^{\frac{1}{2}}\bSigma_{\bx_{y}}^{\frac{1}{2}}\bU_{\bq})^\dagger \bU_{\bq}\bSigma_{\bx_{y}}^{\frac{1}{2}} \bSigma_{\bx_{y}}^{\frac{\dagger}{2}}\bsigma_{\bx_{y},y}
    \end{align*}
    and the last term in the above display is exactly $(\bU_{\bq}\bSigma_{\bx_{y}}\bU_{\bq})^\dagger \bU_{\bq}\bsigma_{\bx_{y},y} = \bSigma_{\bx_{\bq}}^\dagger\bsigma_{\bx_{\bq},y}$ the minimum-$L^2$-norm solution of $\ls(\bx_{\bq},y)$.

    We now prove the second statement. Under Assumption~\ref{ass:x.y.model.free.2nd} and Equation~\eqref{eq:x.y.elfm} we find $\bSigma_{\bq}^\dagger\bsigma_{\bq,y}$ and $\bSigma_{\bx_{\bq}}^\dagger\bsigma_{\bx_{\bq},y}$ to be solutions of
    \begin{align*}
        \ls(\bq,y) = \ls(\bSigma_{\bq},\bsigma_{\bq,y}),\quad \ls(\bx_{\bq},y) = \ls(\bSigma_{\bq} + \signot^2\bU_{\bq}\bSigma_{\be}\bU_{\bq},\bsigma_{\bx_{\bq},y}).
    \end{align*}
    We now check the assumptions of Theorem~\ref{thm:ls.pert}. First, $\bSigma_{\bq}$ and $\bSigma_{\bx_{\bq}}$ have the same rank. Second, it holds
    \begin{align*}
        \frac{\|\bsigma_{\bx_{\bq},y}-\bsigma_{\bq,y}\|_{2}}{\|\bsigma_{\bq,y}\|_{2}} = 0,\quad  \frac{\|\bSigma_{\bx_{\bq}}-\bSigma_{\bq}\|_{op}}{\|\bSigma_{\bq}\|_{op}} = \signot^2\frac{\lambda_{1}(\bU_{\bq}\bSigma_{\be}\bU_{\bq})}{\lambda_{1}(\bSigma_{\bq})} \leq \frac{\signot^2}{\lambda_{1}(\bSigma_{\bq})} < \frac{1}{2\ \kappa_{2}(\bSigma_{\bq})}.
    \end{align*}
    We can thus apply Theorem~\ref{thm:ls.pert} with $\eps=\signot^2/\lambda_{1}(\bSigma_{\bq})$ and get the claim.
\end{proof}

\begin{lemma}\label{lem:x.y.plr.elfm}
    Let $(\bx,y)\in\R^p\times\R$ satisfy Assumption~\ref{ass:x.y.model.free.2nd}. Under Equation~\eqref{eq:x.y.elfm} let $\balpha_{\ls}$ be the minimum-$L^2$-norm solution of the population least-squares problem $\ls(\bq,y)$ and $\bbeta_{r_{\bq}}$ the $r_{\bq}$-parsimonious parameter for $\mB_{r_{\bq}}$ in Equation~\eqref{eq:x.rel.y.B.s.krylov}. There exists a constant $C_{r_{\bq}}\geq1$ such that, if $\signot^2<\lambda_{r_{\bq}}(\bSigma_{\bq})/\{2 C_{r_{\bq}}+2\}$, then
    \begin{align*}
        \frac{\|\bbeta_{r_{\bq}}-\balpha_{\ls}\|_{2}}{\|\balpha_{\ls}\|_{2}} \leq 5\ \{C_{r_{\bq}}+1\}\ \frac{\signot^2}{\lambda_{r_{\bq}}(\bSigma_{\bq})}.
    \end{align*}
\end{lemma}
\begin{proof}[\textbf{Proof of Lemma~\ref{lem:x.y.plr.elfm}}]
    Without loss of generality, the latent range spans the whole latent Krylov space in the sense that $\mR(\bSigma_{\bq})=\mK_{r_{\bq}}(\bSigma_{\bq},\bsigma_{\bq,y})$. This means that one can rewrite the latent least-squares solution as $\balpha_{\ls}=\balpha_{\pls,r_{\bq}}$ the latent population PLS solution computed by $\pls(\bq,y)$. By Lemma~\ref{lem:x.y.pls.pop} we also can rewrite the $r_{\bq}$-parsimonious parameter as $\bbeta_{r_{\bq}}=\bbeta_{\pls,r_{\bq}}$ the population PLS solution computed from $\pls(\bx_{y},y)$. Under the extended latent factor model in Equation~\eqref{eq:x.y.elfm} we find
    \begin{align*}
        \mK_{r_{\bq}}(\bq,y) = \mK_{r_{\bq}}(\bSigma_{\bq},\bsigma_{\bq,y}),\quad \mK_{r_{\bq}}(\bx_{y},y) = \mK_{r_{\bq}}(\bSigma_{\bq}+\signot^2\bSigma_{\be},\bsigma_{\bq,y}).
    \end{align*}
    We now check that Assumption~2.3 by \cite{finocchio2025mod} holds and we can apply Theorem~2.4 by the same authors. We need to check five conditions. Condition~(i) requires parsimony, this is true because $r_{\bq}=\dim(\mK_{r_{\bq}}(\bq,y)) \leq \dim(\mR(\bSigma_{\bq})) = r_{\bq}$. Condition~(ii) requires stability, this was shown to be true by \cite{finocchio2025mod} so we can always find constants $C_{r_{\bq}}\geq1$, $D_{r_{\bq}}\geq1$ and $M_{r_{\bq}}=2\ \kappa_{2}(\bSigma_{\bq})\ \{C_{r_{\bq}}+1\}$. Condition~(iii) requires compatibility, this is true because $\dim(\mK_{r_{\bq}}(\bq,y))=r_{\bq}=\dim(\mK_{r_{\bq}}(\bx_{y},y))$. Condition~(iv) requires adaptivity, which is true by Lemma~\ref{lem:x.y.pls.pop}. Condition~(v) requires small perturbation error, which is true because
    \begin{align*}
        \frac{\|\bsigma_{\bx_{y},y}-\bsigma_{\bq,y}\|_{2}}{\|\bsigma_{\bq,y}\|_{2}} \vee \frac{\|\bSigma_{\bx_{y}}-\bSigma_{\bq}\|_{op}}{\|\bSigma_{\bq}\|_{op}} &= 0 \vee \frac{\signot^2}{\|\bSigma_{\bq}\|_{op}} < \frac{1}{M_{r_{\bq}}}.
    \end{align*}
    We thus find
    \begin{align*}
        \frac{\|\bbeta_{r_{\bq}}-\balpha_{\ls}\|_{2}}{\|\balpha_{\ls}\|_{2}} \leq 5\ \kappa_{2}(\bSigma_{\bq})\ \{C_{r_{\bq}}+1\}\ \frac{\signot^2}{\|\bSigma_{\bq}\|_{op}},
    \end{align*}
    which is the claim since $\|\bSigma_{\bq}\|_{op}=\lambda_{1}(\bSigma_{\bq})$ and $\kappa_{2}(\bSigma_{\bq})=\lambda_{1}(\bSigma_{\bq})/\lambda_{r_{\bq}}(\bSigma_{\bq})$.
\end{proof}

\begin{lemma}\label{lem:x.rel.y.B.s.latent}
    Let $(\bx,y)\in\R^p\times\R$ satisfy Assumption~\ref{ass:x.y.model.free.2nd}. 
    Under Equation~\eqref{eq:x.y.elfm} assume that $\|\balpha_{\ls}-\bbeta_{r_{\bq}}\|_{\bSigma_{\bq}^2}=\min_{1\leq s\leq m_{y}}\|\balpha_{\ls}-\bbeta_{s}\|_{\bSigma_{\bq}^2}$. Then, the smallest dimension in Equation~\eqref{eq:x.rel.y.B.s.star} satisfies $s_{0}\leq r_{\bq}$.
\end{lemma}
\begin{proof}[\textbf{Proof of Lemma~\ref{lem:x.rel.y.B.s.latent}}]
    With $\mB_{s}=\mK_{s}(\bx_{y},y)$ for all $r_{\bq}\leq s\leq m_{y}$, let $\bU_{s}$ be the orthogonal projection of $\R^p$ onto $\mB_{s}$, we have $\bq_{s}=\bU_{s}\bq$, $\be_{s}=\bU_{s}\be$ and $\bx_{s} = \bU_{s}\bx_{y} = \bq_{s} + \be_{s}$. Since $\bbeta_{s}=\bU_{s}\bbeta_{s}$, we find
    \begin{align*}
        \argmin_{r_{\bq}\leq s\leq m_{y}}\ \E(y-\bx_{s}^t\bbeta_{s})^2 &= \argmin_{r_{\bq}\leq s\leq m_{y}}\ \E(\bq^t\balpha_{\ls}-\bq_{s}^t\bbeta_{s}-\be_{s}^t\bbeta_{s})^2 \displaybreak[0] \\
        &= \argmin_{r_{\bq}\leq s\leq m_{y}}\ \left\{\E(\bq^t\{\balpha_{\ls}-\bbeta_{s}\})^2 + \E(\be^t\bbeta_{s})^2 \right\} \displaybreak[0] \\
        &= \argmin_{r_{\bq}\leq s\leq m_{y}}\ \left\{\|\balpha_{\ls}-\bbeta_{s}\|_{\bSigma_{\bq}}^2 + \|\bbeta_{s}\|_{\bSigma_{\be}}^2 \right\} \displaybreak[0] \\
        &= \argmin_{r_{\bq}\leq s\leq m_{y}}\ \left\{\|\balpha_{\ls}-\bbeta_{s}\|_{\bSigma_{\bq}}^2 + \|\bbeta_{r_{\bq}}\|_{\bSigma_{\be}}^2 + \|\bbeta_{s}-\bbeta_{r_{\bq}}\|_{\bSigma_{\be}}^2 \right\} \displaybreak[0] \\
        &= \argmin_{r_{\bq}\leq s\leq m_{y}}\ \left\{\|\balpha_{\ls}-\bbeta_{s}\|_{\bSigma_{\bq}}^2 + \|\bbeta_{s}-\bbeta_{r_{\bq}}\|_{\bSigma_{\be}}^2 \right\}.
    \end{align*}
    By assumption, the latter display attains minimum at $s=r_{\bq}$, thus the minimizing set is, at its largest, $\{r_{\bq},r_{\bq}+1,\ldots,m_{y}\}$. Therefore, the smallest dimension in Equation~\eqref{eq:x.rel.y.B.s.star} satisfies $s_{0}\leq \min\{r_{\bq},\ldots,m_{y}\} = r_{\bq}$.
\end{proof}

\subsection{Proofs for Section~\ref{sec:pls}}\label{app:proof:pls}

\begin{proof}[\textbf{Proof of Lemma~\ref{lem:x.y.pls.pop}}]
    We recall the definitions 
    \begin{align*}
        \mK_{s}(\bx_{y},y) = \spa\{\bsigma_{\bx_{y},y},\ldots,\bSigma_{\bx_{y}}^{s-1}\bsigma_{\bx_{y},y}\},\quad 
        \mK_{s}(\bx,y) = \spa\{\bsigma_{\bx,y},\ldots,\bSigma_{\bx}^{s-1}\bsigma_{\bx,y}\},
    \end{align*}
    for all $1\leq s\leq p$. From the definition of relevant subspace in Equation~\eqref{eq:rel.sub} and the orthogonal factorization in Equation~\eqref{eq:x.rel.x.irr}, it follows
    \begin{align*}
        \bsigma_{\bx,y} &= \E(\bx y) = \E(\bx_{y} y) \oplus \E(\bx_{y^\bot} y) = \E(\bx_{y} y) \oplus \bzero_{p} = \bsigma_{\bx_{y},y}, \\
        \bSigma_{\bx} &= \E(\bx\bx^t) = \E(\bx_{y}\oplus\bx_{y^\bot})(\bx_{y}\oplus\bx_{y^\bot})^t = \E(\bx_{y}\bx_{y}^t) \oplus \E(\bx_{y^\bot}\bx_{y^\bot}^t) = \bSigma_{\bx_{y}} \oplus \bSigma_{\bx_{y^\bot}}.
    \end{align*}
    Thus, the same holds for $\bSigma_{\bx}^s = (\bSigma_{\bx_{y}} \oplus \bSigma_{\bx_{y^\bot}})^s = \bSigma_{\bx_{y}}^s \oplus \bSigma_{\bx_{y^\bot}}^s$. One last computation yields
    \begin{align*}
        \mK_{s}(\bx,y) &= \spa\{\bsigma_{\bx,y},\ldots,\bSigma_{\bx}^{s-1}\bsigma_{\bx,y}\}, \displaybreak[0] \\
        &= \spa\{\bsigma_{\bx_{y},y},\ldots,\bSigma_{\bx_{y}}^{s-1}\bsigma_{\bx_{y},y} \oplus \bSigma_{\bx_{y^\bot}}^{s-1}\bsigma_{\bx_{y},y} \} \displaybreak[0] \\
        &= \spa\{\bsigma_{\bx_{y},y},\ldots,\bSigma_{\bx_{y}}^{s-1}\bsigma_{\bx_{y},y} \oplus \bzero_{p} \} \displaybreak[0] \\
        &= \mK_{s}(\bx_{y},y),
    \end{align*}
    which is the claim.
\end{proof}

\begin{proof}[\textbf{Proof of Theorem~\ref{thm:x.y.pls.pop}}]
    It follows from Lemma~\ref{lem:x.y.pls.pop} that $\bbeta_{\pls,s_{0}}=\bbeta_{s_{0}}$. This means that
    \begin{align*}
        \frac{\|\bbeta_{\pls,s}-\bbeta_{s_{0}}\|_{2}}{\|\bbeta_{s_{0}}\|_{2}} = \frac{\|\bbeta_{\pls,s}-\bbeta_{\pls,s_{0}}\|_{2}}{\|\bbeta_{\pls,s_{0}}\|_{2}}.
    \end{align*}
    From the orthogonality property of the PLS method, see Lemma~\ref{lem:pls.resid}, for all $1\leq s\leq s_{0}$ the residual $\bbeta_{\pls,s_{0}}-\bbeta_{\pls,s}$ is orthogonal to $\bbeta_{\pls,s}$. This means that we can find an orthonormal basis $\{\bk_{1},\ldots,\bk_{s_{0}}\}$ of $\mK_{s_{0}}(\bx,y)$ such that $\bbeta_{\pls,s_{0}} = \sum_{\ell=1}^{s_{0}} c_{\ell} \bk_{\ell}$ and $\bbeta_{\pls,s} = \sum_{\ell=1}^{s} c_{\ell} \bk_{\ell}$ with the same coefficients. We thus bound,
    \begin{align*}
        \|\bbeta_{\pls,s_{0}}-\bbeta_{\pls,s}\|_{2} = \left(\sum_{\ell=s+1}^{s_{0}} c_{\ell}^2\right)^{\frac{1}{2}} \leq \left(\max_{\ell=s+1,\ldots,s_{0}} c_{\ell}^2\right)^{\frac{1}{2}} \sqrt{s_{0}-s} \leq \|\bbeta_{\pls,s_{0}}\|_{2}\ \sqrt{s_{0}-s}.
    \end{align*}
    We obtain the claim by dividing the above display by $\|\bbeta_{\pls,s_{0}}\|_{2}$.
\end{proof}

\begin{proof}[\textbf{Proof of Theorem~\ref{thm:x.y.pls.pop.elfm}}]
    By Lemma~\ref{lem:x.y.pls.pop} the $r_{\bq}$-dimensional population PLS solution coincides with the $r_{\bq}$-parsimonious parameter $\bbeta_{\pls,r_{\bq}}=\bbeta_{r_{\bq}}\in\mB_{r_{\bq}}$ in Equation~\eqref{eq:x.rel.y.B.s.krylov}. For all $1\leq s\leq r_{\bq}$, we can apply the triangle inequality to get
    \begin{align*}
        \frac{\|\bbeta_{\pls,s}-\balpha_{\ls}\|_{2}}{\|\balpha_{\ls}\|_{2}} &\leq \frac{\|\bbeta_{\pls,s}-\bbeta_{r_{\bq}}\|_{2}}{\|\bbeta_{r_{\bq}}\|_{2}} \cdot \frac{\|\bbeta_{r_{\bq}}\|_{2}}{\|\balpha_{\ls}\|_{2}} + \frac{\|\bbeta_{r_{\bq}}-\balpha_{\ls}\|_{2}}{\|\balpha_{\ls}\|_{2}} \\
        &\leq \frac{\|\bbeta_{\pls,s}-\bbeta_{r_{\bq}}\|_{2}}{\|\bbeta_{r_{\bq}}\|_{2}} \cdot \frac{\|\bbeta_{r_{\bq}}-\balpha_{\ls}\|_{2}+\|\balpha_{\ls}\|_{2}}{\|\balpha_{\ls}\|_{2}} + \frac{\|\bbeta_{r_{\bq}}-\balpha_{\ls}\|_{2}}{\|\balpha_{\ls}\|_{2}} \\
        &= \frac{\|\bbeta_{\pls,s}-\bbeta_{r_{\bq}}\|_{2}}{\|\bbeta_{r_{\bq}}\|_{2}} \cdot \left\{\frac{\|\bbeta_{r_{\bq}}-\balpha_{\ls}\|_{2}}{\|\balpha_{\ls}\|_{2}}+1\right\} + \frac{\|\bbeta_{r_{\bq}}-\balpha_{\ls}\|_{2}}{\|\balpha_{\ls}\|_{2}}.
    \end{align*}
    Since the assumptions of Theorem~\ref{thm:x.y.pls.pop} and Lemma~\ref{lem:x.y.plr.elfm} hold, we can apply them to get
    \begin{align*}
        \frac{\|\bbeta_{\pls,s}-\balpha_{\ls}\|_{2}}{\|\balpha_{\ls}\|_{2}} &\leq \sqrt{r_{\bq}-s} \cdot \left\{5\ \{C_{r_{\bq}}+1\}\ \frac{\signot^2}{\lambda_{r_{\bq}}(\bSigma_{\bq})}+1\right\} + 5\ \{C_{r_{\bq}}+1\}\ \frac{\signot^2}{\lambda_{r_{\bq}}(\bSigma_{\bq})} \\
        &\leq \frac{7}{2}\sqrt{r_{\bq}-s} + 5\ \{C_{r_{\bq}}+1\}\ \frac{\signot^2}{\lambda_{r_{\bq}}(\bSigma_{\bq})},
    \end{align*}
    which is the claim.
\end{proof}

\begin{proof}[\textbf{Proof of Theorem~\ref{thm:x.y.pls.pop.elfm.stop}}]
    Under Equation~\eqref{eq:x.y.elfm} we have moments $\bsigma_{\bx,y}=\bsigma_{\bq,y}$ and $\bSigma_{\bx}=\bSigma_{\bq}+\signot^2\bSigma_{\be}+\bSigma_{\bx_{y^\bot}}$. With $\bU_{r_{\bq}}$ the orthogonal projection of $\R^p$ onto the $r_{\bq}$-parsimonious reduction $\mB_{r_{\bq}}=\mK_{r_{\bq}}(\bSigma_{\bx_{y}},\bsigma_{\bx_{y},y})\subseteq\mB_{y}$ in Equation~\eqref{eq:x.rel.y.B.s.krylov} and the fact that $\bU_{r_{\bq}}\bSigma_{\bx}\bU_{r_{\bq}}=\bU_{r_{\bq}}\bSigma_{\bx_{y}}\bU_{r_{\bq}}$, we can write
    \begin{align*}
        \kappa_{2}(\bU_{r_{\bq}}\bSigma_{\bx}\bU_{r_{\bq}}) &= \kappa_{2}(\bU_{r_{\bq}}\bSigma_{\bx_{\by}}\bU_{r_{\bq}}) = \frac{\lambda_{1}(\bU_{r_{\bq}}\bSigma_{\bq}\bU_{r_{\bq}} + \signot^2\bU_{r_{\bq}}\bSigma_{\be}\bU_{r_{\bq}})}{\lambda_{r_{\bq}}(\bU_{r_{\bq}}\bSigma_{\bq}\bU_{r_{\bq}} + \signot^2\bU_{r_{\bq}}\bSigma_{\be}\bU_{r_{\bq}})}.
    \end{align*}
    We bound the latter display from above by invoking Weyl's inequality in Lemma~\ref{lem:weyl}. We find
    \begin{align}\label{eq:proof.k2.up1}
        \kappa_{2}(\bU_{r_{\bq}}\bSigma_{\bx}\bU_{r_{\bq}}) &\leq \frac{\lambda_{1}(\bU_{r_{\bq}}\bSigma_{\bq}\bU_{r_{\bq}}) + \signot^2\lambda_{1}(\bU_{r_{\bq}}\bSigma_{\be}\bU_{r_{\bq}})}{\lambda_{r_{\bq}}(\bU_{r_{\bq}}\bSigma_{\bq}\bU_{r_{\bq}}) + \signot^2\lambda_{r_{\bq}}(\bU_{r_{\bq}}\bSigma_{\be}\bU_{r_{\bq}})} \leq \frac{\lambda_{1}(\bU_{r_{\bq}}\bSigma_{\bq}\bU_{r_{\bq}}) + \signot^2}{\lambda_{r_{\bq}}(\bU_{r_{\bq}}\bSigma_{\bq}\bU_{r_{\bq}})}.
    \end{align}
    With $\bU_{\bq}$ the orthogonal projection of $\R^p$ onto $\mR(\bSigma_{\bq})$, we find $\mR(\bU_{r_{\bq}})=\mK_{r_{\bq}}(\bSigma_{\bq}+\signot^2\bSigma_{\be},\bsigma_{\bq,y})$ and $\mR(\bU_{\bq})=\mK_{r_{\bq}}(\bSigma_{\bq},\bsigma_{\bq,y})$. Therefore, by definition of constant of stability $C_{r_{\bq}}\geq1$ for population PLS it must be that
    \begin{align*}
        \|\bU_{r_{\bq}}-\bU_{\bq}\|_{op} \leq C_{r_{\bq}}\ \left\{\frac{\|\bsigma_{\bx_{y},y}-\bsigma_{\bq,y}\|_{2}}{\|\bsigma_{\bq,y}\|_{2}} \vee \frac{\|\bSigma_{\bx_{y}}-\bSigma_{\bq}\|_{op}}{\|\bSigma_{\bq}\|_{op}} \right\} = C_{r_{\bq}}\ \frac{\signot^2}{\|\bSigma_{\bq}\|_{op}}.
    \end{align*}
    This implies that    
    \begin{align*}
        \|\bU_{r_{\bq}}\bSigma_{\bq}\bU_{r_{\bq}}-\bU_{\bq}\bSigma_{\bq}\bU_{\bq}\|_{op} \leq 2\ \|\bU_{r_{\bq}}-\bU_{\bq}\|_{op}\ \|\bSigma_{\bq}\|_{op} \leq 2\ C_{r_{\bq}}\ \signot^2.
    \end{align*}
    Invoking again Weyl's inequality in Lemma~\ref{lem:weyl}, together with $\signot^2<\lambda_{r_{\bq}}(\bSigma_{\bq})/\tau\{C_{r_{\bq}}+1\}$ we can further bound
    \begin{align}\label{eq:proof.k2.up2}
        \kappa_{2}(\bU_{r_{\bq}}\bSigma_{\bx}\bU_{r_{\bq}}) &\leq \frac{\lambda_{1}(\bSigma_{\bq}) + 2C_{r_{\bq}}\signot^2 + \signot^2}{\lambda_{r_{\bq}}(\bSigma_{\bq})-2C_{r_{\bq}}\signot^2} < \frac{\lambda_{1}(\bSigma_{\bq}) + \frac{2}{\tau}\lambda_{1}(\bSigma_{\bq})}{\lambda_{r_{\bq}}(\bSigma_{\bq})-\frac{2}{\tau}\lambda_{r_{\bq}}(\bSigma_{\bq})} = \frac{\tau+2}{\tau-2}\ \kappa_{2}(\bSigma_{\bq}).
    \end{align}
    With $\bU_{r_{\bq}+1}$ the orthogonal projection of $\R^p$ onto $\mB_{r_{\bq}+1}=\mK_{r_{\bq}+1}(\bSigma_{\bx_{y}},\bsigma_{\bx_{y},y})\subseteq\mB_{y}$, we now write
    \begin{align*}
        \kappa_{2}(\bU_{r_{\bq}+1}\bSigma_{\bx}\bU_{r_{\bq}+1}) &= \kappa_{2}(\bU_{r_{\bq}+1}\bSigma_{\bx_{y}}\bU_{r_{\bq}+1}) = \frac{\lambda_{1}(\bU_{r_{\bq}+1}\bSigma_{\bq}\bU_{r_{\bq}+1} + \signot^2\bU_{r_{\bq}+1}\bSigma_{\be}\bU_{r_{\bq}+1})}{\lambda_{r_{\bq}+1}(\bU_{r_{\bq}+1}\bSigma_{\bq}\bU_{r_{\bq}+1} + \signot^2\bU_{r_{\bq}+1}\bSigma_{\be}\bU_{r_{\bq}+1})}.
    \end{align*}
    We bound the latter display from below by invoking Weyl's inequality in Lemma~\ref{lem:weyl}. We find
    \begin{align}\label{eq:proof.k2.low1}
        \kappa_{2}(\bU_{r_{\bq}+1}\bSigma_{\bx}\bU_{r_{\bq}+1}) &\geq \frac{\lambda_{1}(\bU_{r_{\bq}+1}\bSigma_{\bq}\bU_{r_{\bq}+1}) + \signot^2\lambda_{r_{\bq}+1}(\bU_{r_{\bq}+1}\bSigma_{\be}\bU_{r_{\bq}+1})}{\lambda_{r_{\bq}+1}(\bU_{r_{\bq}+1}\bSigma_{\bq}\bU_{r_{\bq}+1}) + \signot^2\lambda_{1}(\bU_{r_{\bq}+1}\bSigma_{\be}\bU_{r_{\bq}+1})} \geq \frac{\lambda_{1}(\bU_{r_{\bq}+1}\bSigma_{\bq}\bU_{r_{\bq}+1})}{\signot^2}.
    \end{align}
    Notice in the latter display that the matrix $\bU_{r_{\bq}+1}\bSigma_{\bq}\bU_{r_{\bq}+1}$ has rank $r_{\bq}$. Furthermore, since $\mR(\bU_{r_{\bq}})\subseteq\mR(\bU_{r_{\bq}+1})$ and $\signot^2<\lambda_{r_{\bq}}(\bSigma_{\bq})/\tau\{C_{r_{\bq}}+1\}\leq \lambda_{r_{\bq}}(\bSigma_{\bq})/2\tau$, an application of Weyl's inequality in Lemma~\ref{lem:weyl} gives
    \begin{align}\label{eq:proof.k2.low2}
        \kappa_{2}(\bU_{r_{\bq}+1}\bSigma_{\bx}\bU_{r_{\bq}+1}) &\geq \frac{\lambda_{1}(\bU_{r_{\bq}}\bSigma_{\bq}\bU_{r_{\bq}})}{\signot^2} 
        > \frac{\lambda_{1}(\bSigma_{\bq})-\frac{2}{\tau}\lambda_{1}(\bSigma_{\bq})}{\frac{1}{2\tau}\lambda_{r_{\bq}}(\bSigma_{\bq})} = 2\{\tau-2\}\ \kappa_{2}(\bSigma_{\bq}).
    \end{align}
    Since $\tau\geq8$ implies $\{\tau-2\}/\{\tau+2\}\geq 1/2$, we have shown that
    \begin{align*}
        r_{\bq} \in \mM:=\left\{1\leq s\leq p-1: \frac{\kappa_{2}(\bU_{s+1}\bSigma_{\bx}\bU_{s+1})}{\kappa_{2}(\bU_{s}\bSigma_{\bx}\bU_{s})} > \tau-2 \right\},
    \end{align*}
    thus $m_{\bq}=\min\mM \leq r_{\bq}$. 
\end{proof}

\begin{proof}[\textbf{Proof of Theorem~\ref{thm:x.y.pls.sam.elfm.stop}}]
    For all $1\leq s\leq m_{y}$, pick any sequence $\nu_{\wt{\mB}_{s},n} < \nu_{s,n} < \frac{1}{2}$ and denote $\wh{\Omega}_{s}=\{\wh{\eps}(\bx,y)\leq K_{\wt{\mB}_{s}}\ \nu_{s,n}^{-1}\ \delta_{\wt{\mB}_{s},n}\}$ the event of probability at least $1-2\nu_{s,n}$. From now on, we work on the event $\wh{\Omega}_{r_{\bq}+1}$ which has probability at least $1-2\nu_{r_{\bq}+1,n}$. On this event, we consider $\mR(\wh{\bU}_{r_{\bq}})=\mK_{r_{\bq}}(\wh{\bSigma}_{\bx},\wh{\bsigma}_{\bx,y})$ and $\mR(\bU_{r_{\bq}})=\mK_{r_{\bq}}(\bSigma_{\bx},\bsigma_{\bx,y})$ so that by definition of constant of stability $\wt{C}_{r_{\bq}}\geq1$ for the PLS algorithm we have
    \begin{align*}
        \|\wh{\bU}_{r_{\bq}}-\bU_{r_{\bq}}\|_{op} &\leq \wt{C}_{r_{\bq}}\ \left\{\frac{\|\wh{\bSigma}_{\bx}-\bSigma_{\bx}\|_{op}}{\|\bSigma_{\bx}\|_{op}} \vee \frac{\|\wh{\bsigma}_{\bx,y}-\bsigma_{\bx,y}\|_{2}}{\|\bsigma_{\bx,y}\|_{2}}\right\} \leq \wt{C}_{r_{\bq}}\ K_{\wt{\mB}_{r_{\bq}+1}}\ \nu_{r_{\bq}+1,n}^{-1}\ \delta_{\wt{\mB}_{r_{\bq}}+1,n}.
    \end{align*}
    This implies
    \begin{align*}
        \|\wh{\bU}_{r_{\bq}}\wh{\bSigma}_{\bx}\wh{\bU}_{r_{\bq}}-\bU_{r_{\bq}}\bSigma_{\bx}\bU_{r_{\bq}}\|_{op} &\leq \|\wh{\bSigma}_{\bx}-\bSigma_{\bx}\|_{op} + 2\ \|\wh{\bU}_{r_{\bq}}-\bU_{r_{\bq}}\|_{op}\ \|\bSigma_{\bx}\|_{op} \\
        &\leq 3\ \{\|\bSigma_{\bx}\|_{op}\vee\|\bsigma_{\bx,y}\|_{2}\}\ \wt{C}_{r_{\bq}}\ K_{\wt{\mB}_{r_{\bq}+1}}\ \nu_{r_{\bq}+1,n}^{-1}\ \delta_{\wt{\mB}_{r_{\bq}}+1,n}.
    \end{align*}
    Using $3 \{\|\bSigma_{\bx}\|_{op}\vee\|\bsigma_{\bx,y}\|_{2}\} \wt{C}_{r_{\bq}} K_{\wt{\mB}_{r_{\bq}+1}} \nu_{r_{\bq}+1,n}^{-1} \delta_{\wt{\mB}_{r_{\bq}}+1,n} < \lambda_{r_{\bq}}(\bSigma_{\bq})/2\tau$, Weyl's inequality in Lemma~\ref{lem:weyl} and Equations~\eqref{eq:proof.k2.up1}~-~\eqref{eq:proof.k2.up2}, we can bound from above
    \begin{align*}
        \kappa_{2}(\wh{\bU}_{r_{\bq}}\wh{\bSigma}_{\bx}\wh{\bU}_{r_{\bq}}) &= \frac{\lambda_{1}(\wh{\bU}_{r_{\bq}}\wh{\bSigma}_{\bx}\wh{\bU}_{r_{\bq}})}{\lambda_{r_{\bq}}(\wh{\bU}_{r_{\bq}}\wh{\bSigma}_{\bx}\wh{\bU}_{r_{\bq}})} \displaybreak[0] \\
        &< \frac{\lambda_{1}(\bU_{r_{\bq}}\bSigma_{\bx}\bU_{r_{\bq}}) + \frac{1}{2\tau}\lambda_{1}(\bSigma_{\bq})}{\lambda_{r_{\bq}}(\bU_{r_{\bq}}\bSigma_{\bx}\bU_{r_{\bq}}) - \frac{1}{2\tau}\lambda_{r_{\bq}}(\bSigma_{\bq})} \displaybreak[0] \\
        &\leq \frac{\lambda_{1}(\bU_{r_{\bq}}\bSigma_{\bq}\bU_{r_{\bq}}) + \signot^2 + \frac{1}{2\tau}\lambda_{1}(\bSigma_{\bq})}{\lambda_{r_{\bq}}(\bU_{r_{\bq}}\bSigma_{\bq}\bU_{r_{\bq}}) - \frac{1}{2\tau}\lambda_{r_{\bq}}(\bSigma_{\bq})} \displaybreak[0] \\
        &\leq \frac{\lambda_{1}(\bSigma_{\bq}) + 2C_{r_{\bq}}\signot^2 + \signot^2 + \frac{1}{2\tau}\lambda_{1}(\bSigma_{\bq})}{\lambda_{r_{\bq}}(\bSigma_{\bq}) - 2C_{r_{\bq}}\signot^2 - \frac{1}{2\tau}\lambda_{r_{\bq}}(\bSigma_{\bq})} \displaybreak[0] \\
        &\leq \frac{\lambda_{1}(\bSigma_{\bq}) + \frac{2}{\tau}\lambda_{1}(\bSigma_{\bq}) + \frac{1}{2\tau}\lambda_{1}(\bSigma_{\bq})}{\lambda_{r_{\bq}}(\bSigma_{\bq}) - \frac{2}{\tau}\lambda_{r_{\bq}}(\bSigma_{\bq}) - \frac{1}{2\tau}\lambda_{r_{\bq}}(\bSigma_{\bq})} \displaybreak[0] \\
        &= \frac{2\tau+5}{2\tau-5}\ \kappa_{2}(\bSigma_{\bq}).
    \end{align*}
    On the same event, we can repeat the argument for $\mR(\wh{\bU}_{r_{\bq}+1})=\mK_{r_{\bq}+1}(\wh{\bSigma}_{\bx},\wh{\bsigma}_{\bx,y})$ and $\mR(\bU_{r_{\bq}+1})=\mK_{r_{\bq}+1}(\bSigma_{\bx},\bsigma_{\bx,y})$. Using $3 \{\|\bSigma_{\bx}\|_{op}\vee\|\bsigma_{\bx,y}\|_{2}\} \wt{C}_{r_{\bq}+1} K_{\wt{\mB}_{r_{\bq}+1}} \nu_{r_{\bq}+1,n}^{-1} \delta_{\wt{\mB}_{r_{\bq}+1},n} < \lambda_{r_{\bq}}(\bSigma_{\bq})/2\tau$, Weyl's inequality in Lemma~\ref{lem:weyl} and Equations~\eqref{eq:proof.k2.low1}~-~\eqref{eq:proof.k2.low2}, we can bound from below
    \begin{align*}
        \kappa_{2}(\wh{\bU}_{r_{\bq}+1}\wh{\bSigma}_{\bx}\wh{\bU}_{r_{\bq}+1}) &= \frac{\lambda_{1}(\wh{\bU}_{r_{\bq}+1}\wh{\bSigma}_{\bx}\wh{\bU}_{r_{\bq}+1})}{\lambda_{r_{\bq}+1}(\wh{\bU}_{r_{\bq}+1}\wh{\bSigma}_{\bx}\wh{\bU}_{r_{\bq}+1})} \displaybreak[0] \\
        &> \frac{\lambda_{1}(\bU_{r_{\bq}+1}\bSigma_{\bx}\bU_{r_{\bq}+1}) - \frac{1}{2\tau}\lambda_{1}(\bSigma_{\bq})}{\lambda_{r_{\bq}+1}(\bU_{r_{\bq}+1}\bSigma_{\bx}\bU_{r_{\bq}+1}) + \frac{1}{2\tau}\lambda_{r_{\bq}}(\bSigma_{\bq})} \displaybreak[0] \\
        &\geq \frac{\lambda_{1}(\bSigma_{\bq}) - \frac{2}{\tau}\lambda_{1}(\bSigma_{\bq}) - \frac{1}{2\tau}\lambda_{1}(\bSigma_{\bq})}{\frac{1}{2\tau}\lambda_{r_{\bq}}(\bSigma_{\bq}) + \frac{1}{2\tau}\lambda_{r_{\bq}}(\bSigma_{\bq})} \displaybreak[0] \\
        &= \frac{2\tau-5}{2}\ \kappa_{2}(\bSigma_{\bq}).
    \end{align*}
    Since $\tau\geq8$ implies $\{2\tau-5\}/\{2\tau+5\}\geq1/2$, we have shown that with probability at least $1-2\nu_{r_{\bq}+1,n}$,
    \begin{align*}
        r_{\bq} \in \wh{\mM}:=\left\{1\leq s\leq p-1: \frac{\kappa_{2}(\wh{\bU}_{s+1}\wh{\bSigma}_{\bx}\wh{\bU}_{s+1})}{\kappa_{2}(\wh{\bU}_{s}\wh{\bSigma}_{\bx}\wh{\bU}_{s})} > \frac{2\tau-5}{4} \right\}
    \end{align*}
    so that $\wh{m}_{\bq}=\min\wh{\mM} \leq r_{\bq}$.
\end{proof}

\end{appendix}

\clearpage

\bibliographystyle{plainnat}       
\bibliography{bibPRO}           

\end{document}